\newtheorem{theorem}{Theorem}[section]
\newtheorem{lemma}{Lemma}[section]
\newtheorem{corollary}{Corollary}[section]
\theoremstyle{definition}
\newtheorem{remark}{Remark}[section]
\newtheorem{question}{Question}[section]
\newtheorem{example}{Example}[section]
\newcommand{\Sp}[1]{\mathcal{#1}}
\newcommand{\Spint}[1]{\Sp{#1}^{\circ}}
\DeclareMathOperator{\arcosh}{arcosh}
\DeclareMathOperator{\arsinh}{arsinh}
\DeclareMathOperator{\artanh}{artanh}
\DeclareMathOperator{\diam}{diam}
\DeclareMathOperator{\Si}{Si}
\DeclareMathOperator{\SHi}{SHi}
\DeclareMathOperator{\ASi}{ASi}
\DeclareMathOperator{\ASHi}{ASHi}
\DeclareMathOperator{\Ti}{Ti}
\DeclareMathOperator{\THi}{THi}
\DeclareMathOperator{\ATi}{ATi}
\DeclareMathOperator{\ATHi}{ATHi}
\DeclareMathOperator{\id}{id}
\begin{document}
\title{On Seiffert-like means}

\author{Alfred Witkowski}
\address{Institute of Mathematics and Physics\\University of Technology and Life Sciences\\al. prof. Kaliskiego 7\\85-796 Bydgoszcz, Poland}
\email{alfred.witkowski@utp.edu.pl}
\subjclass[2000]{26D15}
\keywords{Seiffert mean, logarithmic mean, Seiffert function}
\date{August 7, 2013}
\begin{abstract}
We investigate the representation of homogeneous, symmetric means in the form 
$$M(x,y)=\frac{x-y}{2f\left(\frac{x-y}{x+y}\right)}.$$
This allows for a new approach to comparing means. As an example, we provide optimal estimate of the form
\begin{equation*}
	({1-\mu}){\min(x,y)}+ {\mu}{\max(x,y)}\leqslant{M(x,y)}\leqslant ({1-\nu}){\min(x,y)}+ {\nu}{\max(x,y)}
\end{equation*}
and
\begin{equation*}
M(\tfrac{x+y}{2}-\mu\tfrac{x-y}{2},\tfrac{x+y}{2}+\mu\tfrac{x-y}{2})\leqslant N(x,y)\leqslant M(\tfrac{x+y}{2}-\nu\tfrac{x-y}{2},\tfrac{x+y}{2}+\nu\tfrac{x-y}{2})
\end{equation*} 
for some known means.

\end{abstract}
\maketitle
%=================================
%=================================
%=================================
\section{Introduction, Definitions and  notation}

Looking at the two means introduced by Seiffert in \cite{SE3} 
\begin{equation*}
	P(x,y)=\begin{cases}
		\dfrac{x-y}{2\arcsin\frac{x-y}{x+y}}\label{def:eq_se1} & x \neq y\\
				x & x=y
	\end{cases},
\end{equation*}
and in \cite{SE4}
\begin{equation*}
	T(x,y)=\begin{cases}
		\dfrac{x-y}{2\arctan\frac{x-y}{x+y}}\label{def:eq_se2} & x \neq y\\
				x & x=y
	\end{cases},
\end{equation*}
as well as   two other Seiffert-like means involving inverse hyperbolic function: introduced  in \cite{NeuSan}
\begin{equation*}
	M(x,y)=\begin{cases}
		\dfrac{x-y}{2\arsinh\frac{x-y}{x+y}}\label{def:eq_se31} & x \neq y\\
				x & x=y
	\end{cases},
\end{equation*}
and well known logarithmic mean
\begin{equation*}
	L(x,y)=\begin{cases}
		\dfrac{x-y}{2\artanh\frac{x-y}{x+y}}=\dfrac{x-y}{\log x-\log y}\label{def:eq_se4} & x \neq y\\
				x & x=y
	\end{cases},
\end{equation*}
it begets the following questions:
\begin{question}\label{qu:1}For positive $x,y$ and a positive function $f:(0,1)\to\mathbb{R}$ let 

\begin{equation}\label{eq:definition of S_f}
	S_f(x,y)=\begin{cases}
		\dfrac{|x-y|}{2f\left(\frac{|x-y|}{x+y}\right)} & x\neq y,\\
		x & x=y
	\end{cases}. 
\end{equation}
Under what assumptions on $f$ the function $S_f$ is a mean, i.e. satisfies $\min(x,y)\leqslant S_f(x,y)\leqslant\max(x,y)$ for all $x,y>0$?
\end{question}
\begin{question}\label{qu:2}
What means $M$ can be represented in the form $S_{f}$?
\end{question}

The aim of this paper is to answer  the two question stated above and explore the subject. In particular, we introduce a metric and an algebraic structure on the set of means.
Then we show how the representation of a mean in the form $S_{f}$ can be used to investigate properties of $M$. In particular, we offer a simple criterium for Schur convexity of $M$ and  criteria for finding optimal bounds of the form
\begin{align*}
	({1-\mu}){K(x,y)}+ {\mu}{N(x,y)}&\leqslant{M(x,y)}\leqslant ({1-\nu}){K(x,y)}+ {\nu}{N(x,y)},\\
M(\tfrac{x+y}{2}-\mu\tfrac{x-y}{2},\tfrac{x+y}{2}+\mu\tfrac{x-y}{2})&\leqslant N(x,y)\leqslant M(\tfrac{x+y}{2}-\nu\tfrac{x-y}{2},\tfrac{x+y}{2}+\nu\tfrac{x-y}{2})
	\end{align*}
where $K<M<N$ are homogeneous means.

Denote by $\Sp{M}$ the set of functions $M:\mathbb{R}_+^2\to\mathbb{R}$ satisfying the following conditions:
\begin{enumerate}[(A)]
	\item $M$\label{mean_prop:A} is symmetric, i.e. $M(x,y)=M(y,x)$ for all $x,y\in\mathbb{R}_+$.
	\item $M$ \label{mean_prop:B}  is positively homogeneous of order $1$, i.e. for all $\lambda>0$ holds $M(\lambda x,\lambda y)=\lambda M(x,y)$.
	\item $M$ \label{mean_prop:C} "lies in between", i.e. $\min(x,y)\leqslant M(x,y)\leqslant \max(x,y)$.
\end{enumerate}
 Elements of $\Sp{M}$ shall be called means. 
 The set of strict means, i.e. means  satisfying the condition 
\begin{enumerate}[(A)]
\setcounter{enumi}{3}
	\item \label{mean_prop:D} $\min(x,y)< M(x,y)< \max(x,y)$ whenever $x\neq y$,
\end{enumerate}
 will be denoted by $\Spint{M}$.

By $\Sp{S}$ we shall denote the set of   functions $f:(0,1)\to\mathbb{R}$ satisfying
\begin{equation}\label{ineq:basic}
	\frac{z}{1+z}\leqslant f(z)\leqslant \frac{z}{1-z}.
\end{equation}
We shall call them Seiffert functions. 

By $\Spint{S}$ we shall denote the set of these Seiffert functions for which both inequalities in \eqref{ineq:basic} are strict.

Note two important properties of Seiffert functions:
\begin{equation}
\lim_{z\to 0} f(z)=0, \text{ and } \lim_{z\to 0} \frac{f(z)}{z}=1.
\label{eq:propertiesofSeiffert}
\end{equation}
%Sometimes it will be convenient to think about a Seiffert function as an odd function defined on the interval $(-1,1)$. Note that \eqref{ineq:basic} implies continuity of such an extension in case the original function is continuous.

The set of all real functions $f:(0,1)\to\mathbb{R}$ satisfying  the condition $|f(z)|\leqslant 1$ for all $z$ will be denoted by  $\Sp{B}$ and its subset with strict inequality by $\Spint{B}$. The set $\Sp{B}$ is a complete metric space with metric $d_\Sp{B}(f,g)=\sup_{z}|f(z)-g(z)|$.

\section{Answers to the questions}

The next theorem gives complete (and rather surprising) answer to questions stated in the previous section. 
\begin{theorem}\label{thm:M_f_M} The mapping $f\to S_f$ is a ono-to-one correspondence between $\Sp{S}$ and $\Sp{M}$ that transforms $\Spint{S}$ onto $\Spint{M}$.
\end{theorem}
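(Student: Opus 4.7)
The plan is to construct the inverse map explicitly and verify the bijection by a change of variables that reduces every pair $(x,y)$ with $x\neq y$ to the normalized form $(1+z,1-z)$.

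First I would exploit homogeneity: given any $x,y>0$ with $x\neq y$, set $s=(x+y)/2$ and $z=|x-y|/(x+y)\in(0,1)$, so that $\{x,y\}=\{s(1-z),s(1+z)\}$. Under this substitution, the definition of $S_f$ collapses to
\begin{equation*}
S_f(x,y)=\frac{2sz}{2f(z)}=s\cdot\frac{z}{f(z)},
\end{equation*}
while $\min(x,y)=s(1-z)$ and $\max(x,y)=s(1+z)$. Hence property (C) for $S_f$ is \emph{literally equivalent} to $1-z\leqslant z/f(z)\leqslant 1+z$ for every $z\in(0,1)$, which after inversion is precisely the defining inequality \eqref{ineq:basic} for $f\in\Sp{S}$. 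Symmetry (A) follows from the $|x-y|$ factor and homogeneity (B) from the ratio structure, so this single computation shows that $S_f\in\Sp{M}$ whenever $f\in\Sp{S}$, and that strict inequalities transport $\Spint{S}$ into $\Spint{M}$.

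Next I would define the candidate inverse $\Phi:\Sp{M}\to\Sp{S}$ by
\begin{equation*}
\Phi(M)(z):=\frac{z}{M(1+z,1-z)},\qquad z\in(0,1).
\end{equation*}
Property (C) of $M$ applied to the pair $(1+z,1-z)$ gives $1-z\leqslant M(1+z,1-z)\leqslant 1+z$, and a direct manipulation turns this into $\frac{z}{1+z}\leqslant\Phi(M)(z)\leqslant\frac{z}{1-z}$, so $\Phi(M)\in\Sp{S}$; the same computation with strict inequalities shows $\Phi$ maps $\Spint{M}$ into $\Spint{S}$.

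Finally I would verify that $f\mapsto S_f$ and $\Phi$ are mutual inverses. For one composition, $\Phi(S_f)(z)=z/S_f(1+z,1-z)=z/\bigl(2z/(2f(z))\bigr)=f(z)$. For the other, apply the normalization above to $M\in\Sp{M}$: using homogeneity (B),
\begin{equation*}
M(x,y)=s\cdot M(1+z,1-z)=s\cdot\frac{z}{\Phi(M)(z)}=\frac{|x-y|}{2\Phi(M)(z)}=S_{\Phi(M)}(x,y),
\end{equation*}
with equality extended to $x=y$ by the convention in \eqref{eq:definition of S_f}. There is no real obstacle here; the only subtle point is noticing that the whole statement is an identity once one chooses the correct parametrization $(x,y)=(s(1+z),s(1-z))$, after which the two inequalities in \eqref{ineq:basic} correspond term-by-term to the two inequalities defining a mean.
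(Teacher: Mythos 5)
Your proposal is correct and follows essentially the same route as the paper: normalize via homogeneity to the pair $(1+z,1-z)$, observe that condition (C) for $S_f$ is exactly the inequality \eqref{ineq:basic} for $f$, and invert via $f_M(z)=z/M(1+z,1-z)$. The only difference is that you spell out the verification that the two maps are mutual inverses, which the paper relegates to the corollary immediately following its proof.
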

\begin{proof}
	It is obvious that $S_f$ is symmetric and positively homogeneous. $S_f$ is defined for all positive $x\neq y$, since $0<\frac{|x-y|}{x+y}<1$. Suppose that $x<y$ and let $z=\frac{y-x}{x+y}$. Then, the inequalities \eqref{ineq:basic} read
	$$\frac{{y-x}}{2y}\leqslant f\left(\frac{y-x}{x+y}\right)\leqslant \frac{{y-x}}{2x}.$$
	This is equivalent to $x\leqslant S_f(x,y)\leqslant y$ and shows that $S_f$ satisfies (C). Also note that for $f\in\Spint{S}$ the inequalities above are strict, which means that $S_f$ belongs to $\Spint{M}$.\\
	Conversely, for $M\in\Sp{M}$ we have
	\begin{align*}
	M(x,y)&=\frac{x+y}{2}M\left(\frac{x+y-(y-x)}{x+y},\frac{x+y+(y-x)}{x+y}\right)\\
	&=\frac{y-x}{2\dfrac{z}{M(1+z,1-z)}},
\end{align*}
and the function 
\begin{equation}
f_M(z)=\frac{z}{M(1+z,1-z)}
\label{eq:definition of f_M}
\end{equation} in the denominator of the right-hand side is a Seiffert function, because $1-z\leqslant M(1+z,1-z)\leqslant 1+z$.
Again, if $M$ is strict, then so is $f_M$.
\end{proof}
The next corollary follows immediately from the above proof. 
\begin{corollary}
	For arbitrary $M\in\Sp{M}$ and $f\in\Sp{S}$ the identities
	$$f_{S_f}(z)=z,\quad M(x,y)=S_{f_M}(x,y)$$
	hold.
\end{corollary}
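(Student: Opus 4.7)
The plan is to derive both identities by unfolding the defining formulas \eqref{eq:definition of S_f} and \eqref{eq:definition of f_M}. Together they say that the maps $f\mapsto S_f$ and $M\mapsto f_M$ are mutual inverses, which upgrades the bijection of Theorem \ref{thm:M_f_M} to a concrete pair of formulas; essentially all of the required algebra has already appeared in the proof of the theorem, and what remains is to read it off in each direction.

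For the first identity, I would substitute $M=S_f$ into \eqref{eq:definition of f_M} to obtain $f_{S_f}(z)=z/S_f(1+z,1-z)$, and then evaluate the denominator using \eqref{eq:definition of S_f}. Since $(1+z)-(1-z)=2z$ and $(1+z)+(1-z)=2$, the ratio appearing as the argument of $f$ inside \eqref{eq:definition of S_f} simplifies to $z$, reducing $S_f(1+z,1-z)$ to an elementary expression; feeding this back into $f_{S_f}(z)=z/S_f(1+z,1-z)$ and simplifying should produce the asserted identity.

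For the second identity, the converse half of the proof of Theorem \ref{thm:M_f_M} has already performed the required computation in disguise. Taking $x<y$ for definiteness and setting $z=(y-x)/(x+y)$, that proof rewrites $M(x,y)$ as $(y-x)/(2\cdot z/M(1+z,1-z))$, and by \eqref{eq:definition of f_M} the denominator equals $2f_M(z)$. Recognising $y-x$ as $|x-y|$ and $z$ as $|x-y|/(x+y)$, the right-hand side is exactly the value $S_{f_M}(x,y)$ prescribed by \eqref{eq:definition of S_f}. The case $y<x$ is handled symmetrically by the absolute values built into \eqref{eq:definition of S_f}, and $x=y$ is trivial. I do not anticipate any real obstacle; the one bookkeeping point to watch is that the absolute values absorb the sign in $y-x$ when the roles of $x$ and $y$ are exchanged.
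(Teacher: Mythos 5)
Your route is the paper's own: the paper derives this corollary simply by re-reading the proof of Theorem \ref{thm:M_f_M}, and your treatment of the second identity is exactly that argument, correctly executed (including the $x=y$ case and the sign bookkeeping via the absolute values).

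For the first identity, however, you stop one step short of the final simplification, and that step matters. Carrying out your own plan:
\begin{equation*}
S_f(1+z,1-z)=\frac{\left|(1+z)-(1-z)\right|}{2f\left(\frac{\left|(1+z)-(1-z)\right|}{(1+z)+(1-z)}\right)}=\frac{2z}{2f(z)}=\frac{z}{f(z)},
\qquad\text{hence}\qquad
f_{S_f}(z)=\frac{z}{S_f(1+z,1-z)}=f(z).
\end{equation*}
This is \emph{not} the printed identity $f_{S_f}(z)=z$, which holds only when $f=\id$; the statement as printed is a misprint, and the intended identity --- the one that, together with $S_{f_M}=M$, makes the maps $f\mapsto S_f$ and $M\mapsto f_M$ mutually inverse as Theorem \ref{thm:M_f_M} asserts --- is $f_{S_f}=f$. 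Your phrase ``simplifying should produce the asserted identity'' conceals this: executed honestly, your computation refutes the literal statement and proves the corrected one. Complete the simplification explicitly and note the typo; with the statement read as $f_{S_f}(z)=f(z)$, your proof is complete and coincides with the paper's.
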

Very important property of the mappings between the set of means and Seiffert functions is their antimonotonicity.
\begin{corollary}\label{cor:antimonotonicity}
The following conditions are equivalent
\begin{enumerate}[i]
	\item For all $x,y>0,\ M(x,y)\leqslant N(x,y),$
	\item For each $0<z<1, f_M(z)\geqslant f_N(z).$
\end{enumerate}
Also the following conditions are equivalent

\begin{enumerate}[i]\setcounter{enumi}{2}
	\item For all $x,y>0,\ x\neq y,\ M(x,y)< N(x,y),$
	\item For each $0<z<1, f_M(z)> f_N(z).$
\end{enumerate}
\end{corollary}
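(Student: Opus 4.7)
The plan is to read both equivalences directly off the representation $M(x,y)=\frac{|x-y|}{2\,f_M(z)}$ with $z=\frac{|x-y|}{x+y}$ supplied by Theorem~\ref{thm:M_f_M}, together with the positivity $f(z)\geqslant \frac{z}{1+z}>0$ that every Seiffert function enjoys on $(0,1)$. Because the map $M\mapsto f_M$ is a bijection obtained by an algebraic manipulation, and because both $|x-y|$ and the values of $f_M,f_N$ are strictly positive on the relevant range, all four implications reduce to dividing a scalar inequality by a positive quantity.

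For (i)$\Rightarrow$(ii), I would fix $z\in(0,1)$ and specialize the pointwise inequality to the pair $x=1-z$, $y=1+z$, for which $\frac{|x-y|}{x+y}=z$. The assumption gives $M(1-z,1+z)\leqslant N(1-z,1+z)$, and since both values are strictly positive, taking reciprocals and multiplying by $z>0$ yields, via formula \eqref{eq:definition of f_M},
\[
f_M(z)=\frac{z}{M(1+z,1-z)}\geqslant\frac{z}{N(1+z,1-z)}=f_N(z).
\]
For (ii)$\Rightarrow$(i), given arbitrary $x,y>0$ with $x\neq y$, let $z=\frac{|x-y|}{x+y}\in(0,1)$; the assumption $f_M(z)\geqslant f_N(z)>0$ allows division, producing $\frac{|x-y|}{2f_M(z)}\leqslant\frac{|x-y|}{2f_N(z)}$, which by \eqref{eq:definition of S_f} is exactly $M(x,y)\leqslant N(x,y)$. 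The case $x=y$ is trivial since both sides equal $x$.

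The strict equivalence (iii)$\Leftrightarrow$(iv) follows by the identical computations: multiplying or dividing by the strictly positive quantities $|x-y|$, $z$, and $f_M(z)f_N(z)$ preserves strictness in both directions, and every $z\in(0,1)$ is realized by some admissible pair $(1-z,1+z)$ with $x\neq y$, so no value of $z$ is lost when transferring strict inequalities. There is no genuine obstacle in this corollary; the content lives entirely in Theorem~\ref{thm:M_f_M}, which has already established the bijective dictionary, and the antimonotonicity is visible from the single identity $f_M(z)=z/M(1+z,1-z)$ in which the mean appears in the denominator.
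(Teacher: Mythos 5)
Your argument is correct and is exactly the one the paper intends: the corollary is stated without proof as an immediate consequence of Theorem~\ref{thm:M_f_M}, since $f_M(z)=z/M(1+z,1-z)$ places the mean in the denominator and the substitution $(x,y)=(1-z,1+z)$ realizes every $z\in(0,1)$. Your write-up supplies the routine positivity checks the paper leaves implicit; nothing is missing.
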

In many cases comparing the Seiffert funtions is much easier than comparing the means, so the antimonotonicity provides a new tool for proving inequalities between means.\\

The following corollary is a trivial consequence of Theorem \ref{thm:M_f_M}.

\begin{corollary} For the means $\min$ and $\max$ hold
	\begin{equation}
	f_{\min}(z)=\frac{z}{1-z},\qquad f_{\max}(z)=\frac{z}{1+z}.
	\label{eq:SF_minmax}
	\end{equation}
\end{corollary}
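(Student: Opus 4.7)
The plan is to apply formula \eqref{eq:definition of f_M} directly to $M=\min$ and $M=\max$. For any $z\in(0,1)$ we have $0<1-z<1+z$, so $\min(1+z,1-z)=1-z$ and $\max(1+z,1-z)=1+z$. Substituting these into $f_M(z)=z/M(1+z,1-z)$ immediately yields the two formulas.

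As a sanity check I would remark that the resulting expressions are precisely the two boundary functions appearing in the defining inequality \eqref{ineq:basic} of $\Sp{S}$, which is exactly what one expects from Corollary \ref{cor:antimonotonicity}: the pointwise inequalities $\min\leqslant M\leqslant\max$ on $\Sp{M}$ correspond under $M\mapsto f_M$ to the reversed inequalities $f_{\max}\leqslant f_M\leqslant f_{\min}$ on $\Sp{S}$, so $f_{\min}$ and $f_{\max}$ must realize the upper and lower envelopes of the class of Seiffert functions, which is precisely what \eqref{ineq:basic} asserts.

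There is no real obstacle here; the statement is a one-line substitution once the definition \eqref{eq:definition of f_M} is in hand, and the only thing to be careful about is the inequality $1-z<1+z$ that fixes which of the two arguments is the minimum and which is the maximum.
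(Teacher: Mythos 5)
Your proof is correct and matches the paper's intent exactly: the paper states the corollary as a trivial consequence of Theorem \ref{thm:M_f_M}, and the intended argument is precisely the substitution of $\min$ and $\max$ into formula \eqref{eq:definition of f_M} using $1-z<1+z$. Your additional consistency check against \eqref{ineq:basic} and the antimonotonicity of $M\mapsto f_M$ is a nice confirmation but not needed.
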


The pair of mappings $f\to S_f$ and $M\to f_M$  establishes a one-to-one and antimonotone with respect to corresponding partial orders, correspondence between the family of homogeneous symmetric means and the family $\Sp{S}$.\\
Let us transform \eqref{ineq:basic} as follows:
\begin{align}
	\frac{1-z}{z}&\leqslant \frac{1}{f(z)}\leqslant \frac{1+z}{z}\notag\\
	-1&\leqslant \frac{1}{f(z)} - \frac{1}{z} \leqslant 1. \label{ineq:basic1}
\end{align}
This enables us to define a metric on the set of Seiffert functions by
 \begin{equation}
d_S(f,g)=\sup_{0<z<1} \left|\frac{1}{f(z)}-\frac{1}{g(z)}\right|.
 \label{def:d_S}
 \end{equation} 
\begin{lemma}The space $(\Sp{S},d_s)$ has the following properties
	\begin{enumerate}
		\item \label{prop:p1}$(\Sp{S},d_s)$ is a complete metric space and $\diam\Sp{S}=2$,
		\item \label{prop:p2}$(\Sp{S},d_s)$ is a unit ball centered at the identity function $id(z)=z$.
	\end{enumerate}
\end{lemma}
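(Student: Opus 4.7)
The plan is to exploit the reformulation of (\ref{ineq:basic}) as (\ref{ineq:basic1}), which says precisely that $|1/f(z)-1/z|\leqslant 1$ for every $z\in(0,1)$. Once this rewriting is in hand, both parts of the lemma reduce to routine verifications.

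For property (\ref{prop:p2}), the first step is to note that the identity function $id(z)=z$ belongs to $\Sp{S}$, since $z/(1+z)\leqslant z\leqslant z/(1-z)$ on $(0,1)$. The equivalence (\ref{ineq:basic})$\Leftrightarrow$(\ref{ineq:basic1}) then identifies the membership condition ``$f\in\Sp{S}$'' with the inequality ``$d_S(f,id)\leqslant 1$'', so $\Sp{S}$ is exactly the closed $d_S$-ball of radius $1$ around $id$.

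For the diameter in (\ref{prop:p1}), the upper bound is an immediate triangle-inequality step through $id$: for $f,g\in\Sp{S}$, $d_S(f,g)\leqslant d_S(f,id)+d_S(id,g)\leqslant 1+1=2$. To show this bound is attained I would evaluate $d_S$ at the two extremal Seiffert functions from (\ref{eq:SF_minmax}): the reciprocals of $f_{\min}(z)=z/(1-z)$ and $f_{\max}(z)=z/(1+z)$ are $(1-z)/z$ and $(1+z)/z$, whose pointwise difference is the constant $2$, giving $d_S(f_{\min},f_{\max})=2$.

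For completeness, the cleanest approach is to introduce the map $\phi:\Sp{S}\to\Sp{B}$ defined by $\phi(f)(z)=1/f(z)-1/z$. The identity $\phi(f)-\phi(g)=1/f-1/g$ shows that $\phi$ is distance-preserving, $d_\Sp{B}(\phi(f),\phi(g))=d_S(f,g)$, and $\phi$ is well defined into $\Sp{B}$ by the reformulation (\ref{ineq:basic1}). Its inverse is $b\mapsto z/(1+zb(z))$, which is a genuine element of $\Sp{S}$ because $|b(z)|\leqslant 1$ together with $0<z<1$ forces $1+zb(z)>0$ and then (\ref{ineq:basic1}) again certifies $f\in\Sp{S}$. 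Hence $\phi$ is a bijective isometry of $(\Sp{S},d_S)$ onto the complete metric space $(\Sp{B},d_\Sp{B})$, and completeness transfers. The main (and only) mildly technical point is the inversion check guaranteeing $1+zb(z)>0$; apart from this, every step is algebraic bookkeeping.
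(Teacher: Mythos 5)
Your argument is correct, and it rests on the same pivot as the paper's proof, namely the reformulation \eqref{ineq:basic1} of \eqref{ineq:basic} as $\left|1/f(z)-1/z\right|\leqslant 1$; from there the unit-ball statement is read off identically in both treatments. Where you diverge is in the completeness part: the paper argues directly (convergence in $d_\Sp{S}$ forces pointwise convergence of the reciprocals, and the pointwise limit still satisfies \eqref{ineq:basic1}), whereas you transport the whole problem through the map $\phi(f)=1/f-1/z$ onto the complete space $(\Sp{B},d_\Sp{B})$ and pull completeness back along the bijective isometry. Your route is tidier in one respect the paper glosses over: a Cauchy sequence must first be shown to \emph{have} a limit function in $\Sp{S}$ (in particular that the candidate reciprocal never vanishes), and your inversion check $1+zb(z)>0$ for $|b(z)|\leqslant 1$, $0<z<1$ is exactly the point that certifies this; it also anticipates the operator $\Sp{A}$ that the paper only introduces later, in the group-structure section, on the strict subclasses. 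Finally, you make explicit that the diameter $2$ is attained, via $d_\Sp{S}(f_{\min},f_{\max})=\sup_z\left|\tfrac{1+z}{z}-\tfrac{1-z}{z}\right|=2$ using \eqref{eq:SF_minmax}, a detail the paper leaves implicit. So: same underlying idea, but your version is more complete on the two points (existence of the limit, attainment of the diameter) that the paper's two-line proof compresses away.
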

\begin{proof}
	Completeness follows, since the convergence in $d_\Sp{S}$ implies the pointwise convergence, hence the limit function satisfies \eqref{ineq:basic1}. The same inequality implies (2).
\end{proof}

Clearly, the metric on $\Sp{S}$ induces the metric on $\Sp{M}$  as 
$$d_\Sp{M}(M,N)=d_\Sp{S}(f_M,f_N).$$
Thus our space of means is a unit ball centered at the arithmetic mean.\\
To obtain the explicit formula for $d_\Sp{M}(M,N)$, write
\begin{align*}\label{eq:}
	d_\Sp{M}(M,N)&=d_\Sp{S}(f_M,f_N)=\sup_{0<z<1} \left|\frac{1}{f_M(z)}-\frac{1}{f_N(z)}\right|	\\
	&=\sup_{x\neq y}\frac{2}{|x-y|}\left|\frac{|x-y|}{2f_M(z)}-\frac{|x-y|}{2f_N(z)}\right|=2\sup_{x\neq y}\left|\frac{M(x,y)-N(x,y)}{x-y}\right|.
\end{align*}
For more properties of this metric, see \cite{Far}.

As an application, let us prove the following result
\begin{theorem}\label{th:invmean}
	For  $M,N\in\Sp{M}$ satisfying $d_\Sp{M}(M,N)<2$, there exists a unique mean $K\in\Sp{M}$ such that for all $x,y$
	$$K(x,y)=K(M(x,y),N(x,y)).$$
\end{theorem}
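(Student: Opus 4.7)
The plan is to construct $K$ as the limit of the classical Gauss-type iteration: set $x_0=x$, $y_0=y$, and $x_{n+1}=M(x_n,y_n)$, $y_{n+1}=N(x_n,y_n)$. The hypothesis $d_\Sp{M}(M,N)<2$ translates, via the explicit formula
$$d_\Sp{M}(M,N)=2\sup_{u\neq v}\left|\frac{M(u,v)-N(u,v)}{u-v}\right|$$
derived above, into a uniform contractive estimate $|M(u,v)-N(u,v)|\leqslant c|u-v|$ with $c:=d_\Sp{M}(M,N)/2<1$. Hence $|x_{n+1}-y_{n+1}|\leqslant c|x_n-y_n|$, so the difference decays geometrically and both sequences share a common limit which I define to be $K(x,y)$ (on the diagonal $x=y$ the iteration is stationary and $K(x,x)=x$).

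Next I would verify the three mean axioms for $K$. Because $M,N$ satisfy (C), the scalar sequences $\min(x_n,y_n)$ and $\max(x_n,y_n)$ are, respectively, nondecreasing and nonincreasing, and both are trapped between $\min(x,y)$ and $\max(x,y)$; together with the contraction this yields property (C) for $K$. Symmetry of $M$ and $N$ ensures that starting the iteration from $(y,x)$ produces the same pair of sequences with the components interchanged, so $K(y,x)=K(x,y)$. Homogeneity of $M,N$ makes the map $(x_n,y_n)\mapsto(\lambda x_n,\lambda y_n)$ commute with one step of the iteration, so $K(\lambda x,\lambda y)=\lambda K(x,y)$.

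The invariance $K(x,y)=K(M(x,y),N(x,y))$ is then immediate: starting the iteration at $(x_1,y_1)=(M(x,y),N(x,y))$ merely shifts the index, and the limit is unchanged. For uniqueness, suppose $K'\in\Sp{M}$ satisfies $K'(x,y)=K'(M(x,y),N(x,y))$. Induction gives $K'(x,y)=K'(x_n,y_n)$ for every $n$, and applying axiom (C) to $K'$ yields $\min(x_n,y_n)\leqslant K'(x,y)\leqslant \max(x_n,y_n)$; letting $n\to\infty$ forces $K'(x,y)=K(x,y)$. The only real technical point is extracting the uniform contraction constant $c<1$ from the strict inequality $d_\Sp{M}(M,N)<2$; once that is in place the remainder is standard Gauss-compound bookkeeping, and no difficulty arises.
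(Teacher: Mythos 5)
Your proof is correct, but it follows a genuinely different route from the paper. The paper works entirely in the space of Seiffert functions: it defines the operator $\Phi(X)(x,y)=X(M(x,y),N(x,y))$, computes the identity
$$\frac{1}{f_{\Phi(X)}(z)}-\frac{1}{f_{\Phi(Y)}(z)}=\frac{1}{2}\left(\frac{1}{f_M(z)}-\frac{1}{f_N(z)}\right)\left(\frac{1}{f_X(u)}-\frac{1}{f_Y(u)}\right),$$
deduces $d_\Sp{M}(\Phi(X),\Phi(Y))\leqslant \tfrac12 d_\Sp{M}(M,N)\, d_\Sp{M}(X,Y)$, and invokes the Banach fixed point theorem in the complete metric space $(\Sp{S},d_\Sp{S})$ — so existence, uniqueness, and membership in $\Sp{M}$ all come for free from the fixed-point machinery, and the argument doubles as an advertisement for the metric just introduced. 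You instead run the classical Gauss compound-mean iteration on each pair $(x,y)$, extracting the pointwise contraction $|M(u,v)-N(u,v)|\leqslant c\,|u-v|$ with $c=\tfrac12 d_\Sp{M}(M,N)<1$ from the same explicit formula for the metric; this is more elementary (no function-space completeness needed), gives $K(x,y)$ concretely as the common limit of the nested intervals $[\min(x_n,y_n),\max(x_n,y_n)]$, and makes the pointwise uniqueness argument transparent, at the cost of having to verify axioms (A)--(C) by hand — which you do. The only blemish is a harmless misstatement in the symmetry step: starting from $(y,x)$ does not interchange the components of the sequences; rather $M(y,x)=M(x,y)$ and $N(y,x)=N(x,y)$ make the two iterations coincide exactly from the first step on, which still yields $K(y,x)=K(x,y)$. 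Both proofs exploit the same quantitative content of the hypothesis $d_\Sp{M}(M,N)<2$, namely the contraction factor $\tfrac12 d_\Sp{M}(M,N)$.
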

Note that this result is known in case $M,N$ are strict means. There exist means that satisfy $d_\Sp{M}(M,N)<2$ and are not strict, and there are strict means with $d_\Sp{M}(M,N)=2$.
\begin{proof}
	Define the mapping $\Phi:\Sp{S}\to \Sp{S}$ by $\Phi(X)(x,y)=X(M(x,y),N(x,y)).$
	\begin{align*}
		\frac{1}{f_{\Phi(X)}(z)}&=\frac{X(M(1+z,1-z),N(1+z,1-z))}{z}=X\left(\frac{1}{f_M(z)},\frac{1}{f_N(z)}\right)
		\\		&=\frac{\frac{1}{f_M(z)}-\frac{1}{f_N(z)}}{2f_X\left(\frac{{f_M^{-1}(z)}-{f_N^{-1}(z)}}{{f_M^{-1}(z)}+{f_N^{-1}(z)}}\right)}=\frac{1}{2}\left(\frac{1}{f_M(z)}-\frac{1}{f_N(z)}\right)\frac{1}{f_X(u)},
		%{f_{\Phi(X)}^{-1}(z)}&=\frac{X(M(1+z,1-z),N(1+z,1-z))}{z}=X\left(\frac{1}{f_^{-1}M(z)},\frac{1}{f_N^{-1}(z)}\right)
		%\\		&=\frac{\frac{1}{f_M^{-1}(z)}-\frac{1}{f_N^{-1}(z)}}{2f_X\left(\frac{\frac{1}{f_M^{-1}(z)}-\frac{1}{f_N^{-1}(z)}}{\frac{1}{f_M^{-1}(z)}+\frac{1}{f_N^{-1}(z)}}\right)}
	\end{align*}
	where $u=\frac{{f_N(z)}-{f_M(z)}}{{f_M(z)}+{f_N(z)}}.$ Thus
	$$\frac{1}{f_{\Phi(X)}(z)}-\frac{1}{f_{\Phi(Y)}(z)}=\frac{1}{2}\left(\frac{1}{f_M(z)}-\frac{1}{f_N(z)}\right)\left(\frac{1}{f_X(u)}-\frac{1}{f_Y(u)}\right),$$
	which yields
	$$d_\Sp{M}(\Phi(X),\Phi(Y))\leqslant \frac{1}{2}d_\Sp{M}(M,N)d_\Sp{M}(X,Y).$$
	Applying the Banach fixed point theorem, we complete the proof.
\end{proof}

\section{Group structure}
In this section we define   a group structure in $\Spint{M}$. Obviously we express the group action in terms of Seiffert functions. It follows from formula \eqref{ineq:basic} that the operator 
$\Sp{A}:\Spint{S}\to\Spint{B}$ defined by $$\Sp{A}(f)(z)=\frac{1}{f(z)}-\frac{1}{z}$$
is an isometry. Let $\gamma:(-1,1)\to\mathbb{R}$ be an odd, continuous bijection such that $\gamma(0)=0$. We define the addition on $\Spint{S}$ by
$$(f\oplus g)(z)=\Sp{A}^{-1}(\gamma^{-1}(\gamma(\Sp{A}(f)(z))+\gamma(\Sp{A}(g)(z)))).$$
One can easily check that $(\Spint{S},\oplus)$ is an abelian group, and the identity function is the neutral element.\\
The mapping $M\to f_M$  allows us to transfer the group structure to the set of strict means by setting
$$M\oplus N=S_{f_M\oplus f_N}.$$
Clearly, the arithmetic mean becomes the neutral element, and for every $M$ we have $M\oplus(2A-M)=A$.\\
 It is clear that if $g_n$ converges pointwise to $g$, then $f\oplus g_n$ converges pointwise to $f\oplus g$. Unfortunately, the addition is not continuous with respect to the metric $d_\Sp{S}$. To show this, consider the following example: for natural $n$ let $z_n=\gamma^{-1}(n)$ and let $g_n,g,f\in\Spint{B}$ be defined by
$$g_n(z)=\begin{cases}
	z_{n+1}&\text{ if } z=z_n,	\\
	z&\text{ otherwise,}	
\end{cases}\quad g(z)=z,\quad f(z)=-z.$$
We have $d_\Sp{B}(g_n,g)=|z_{n+1}-z_n|\to 0$, $\gamma^{-1}(\gamma(f(z))+\gamma(g(z)))=0$ and $\gamma^{-1}(\gamma(f(z_n))+\gamma(g(z_n)))=\gamma^{-1}(1).$ Consequently, $d_\Sp{M}(f\oplus g_n,f\oplus g)\not\to 0$. 

\section{New Seiffert-like means}\label{sec:new means}
Seiffert introduced two means corresponding to $\arcsin$ and $\arctan$. Two other means mentioned in the introduction come from their hyperbolic companions. Now, ewe shall show, that also $\sin,\;\tan,\;\sinh$ and $\tanh$ are Seiffert functions. 
To this end, we use two lemmae.
\begin{lemma}\label{new seiffert means > A}
	The inequalities
	\begin{equation}\label{main_ineq}
		t>\arsinh t> \arctan t>\tanh t >\frac{t}{1+t}
	\end{equation}
	hold for all $t>0$. Moreover, 
	\begin{equation}\label{ash>s}
		\arsinh t>\sin t
	\end{equation}
	 holds for $0<t<\pi/2$ and 
	 \begin{equation}\label{s>at}
		 \sin t>\arctan t
	 \end{equation}
	is valid 	for $0<t<1$.
\end{lemma}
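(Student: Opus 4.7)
The plan is to prove each of the six inequalities by the same standard device: show that both sides agree (namely, both vanish) at $t=0$, then compare derivatives on the appropriate interval. In every case the comparison of derivatives reduces, by elementary algebra, to a well-known inequality.

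For the main chain $t>\arsinh t>\arctan t>\tanh t>\tfrac{t}{1+t}$, I would go link by link. The link $t>\arsinh t$ is immediate since $(\arsinh t)'=1/\sqrt{1+t^2}<1$. For $\arsinh t>\arctan t$, comparing derivatives gives $1/\sqrt{1+t^2}>1/(1+t^2)$, equivalent to $\sqrt{1+t^2}<1+t^2$, which is obvious for $t>0$. For $\arctan t>\tanh t$, the derivative comparison $1/(1+t^2)>1/\cosh^2 t$ reduces to $\cosh^2 t=1+\sinh^2 t>1+t^2$, i.e.\ $\sinh t>t$, which is a textbook fact. Finally, $\tanh t>\tfrac{t}{1+t}$ rearranges to $\sinh t>t(\cosh t-\sinh t)=te^{-t}$, i.e.\ $e^{2t}-1>2t$, the standard exponential inequality.

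For $\arsinh t>\sin t$ on $(0,\pi/2)$, both sides vanish at $t=0$, so it suffices to show $1/\sqrt{1+t^2}>\cos t$ on that interval. Since both sides are positive on $(0,\pi/2)$, I would square and rearrange: this is equivalent to $(1+t^2)\cos^2 t<1$, i.e.\ $\sin^2 t>t^2\cos^2 t$, i.e.\ $\tan t>t$, which is standard on $(0,\pi/2)$. For $\sin t>\arctan t$ on $(0,1)$, derivative comparison gives $\cos t>1/(1+t^2)$, equivalent to $(1+t^2)\cos t>1$. Using $\cos t\geqslant 1-t^2/2$ (itself a one-line consequence of $\sin u\leqslant u$ for $u\geqslant 0$), one gets
\[
(1+t^2)\cos t\geqslant (1+t^2)\left(1-\tfrac{t^2}{2}\right)=1+\tfrac{t^2(1-t^2)}{2}>1 \qquad\text{for }0<t<1.
\]

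The easier inequalities in the chain will be essentially mechanical; the mild obstacle is spotting the right auxiliary estimates for the two restricted-range claims. The interval bound $t<1$ in \eqref{s>at} is essentially sharp for this elementary method, since the simple estimate $\cos t\geqslant 1-t^2/2$ fails to yield $(1+t^2)\cos t>1$ once $t\geqslant 1$; correspondingly, $\sin t>\arctan t$ does actually break down before $t$ reaches $\pi/2$, so the stated interval is the natural one to expect.
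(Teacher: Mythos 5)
Your proof is correct, and its overall strategy---both sides vanish at $t=0$, so compare derivatives and integrate---is the same one the paper uses; but in two places your route is genuinely different and, in my view, cleaner. The most notable divergence is the last link $\tanh t>\frac{t}{1+t}$ of \eqref{main_ineq}, where a naive derivative comparison actually fails: $\frac{1}{\cosh^2 t}-\frac{1}{(1+t)^2}$ changes sign at the positive intersection point $t_0$ of $\cosh t$ with $1+t$, so the paper must argue that $h(t)=\tanh t-\frac{t}{1+t}$ increases on $(0,t_0)$ and then decreases to $h(\infty)=0$, hence stays nonnegative. Your algebraic rearrangement to $\sinh t>t e^{-t}$, i.e.\ $e^{2t}-1>2t$, sidesteps that difficulty entirely with a one-line exponential inequality. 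Similarly, for \eqref{ash>s} the paper verifies $(1+t^2)\cos^2 t<1$ via the fourth-order estimate $\cos 2t<1-2t^2+2t^4/3$, whereas you reduce it to $\tan t>t$; both are valid, yours needs less computation. The first three links of \eqref{main_ineq} and the proof of \eqref{s>at} coincide with the paper's (the paper uses exactly the bound $\cos t\geqslant 1-t^2/2$ there). Your closing observation is also accurate: $\arctan t$ does overtake $\sin t$ shortly before $t=\pi/2$, so the restricted interval in \eqref{s>at} is not an artifact of the method alone.
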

\begin{proof}
	Since $\cosh^2t>\left(1+\frac{t^2}{2}\right)^2>1+t^2$,  integrating form $0$ to $t$ the inequalities
	$$1>\frac{1}{\sqrt{1+t^2}}>\frac{1}{1+t^2}>\frac{1}{\cosh^2 t},$$ we obtain first three inequalities in \eqref{main_ineq}. To prove the last one, observe that the graph of the convex function $\cosh t$ and the straight line $1+t$ intersect at two points: $t=0$ and $t=t_0>0$. Thus $\frac{1}{\cosh^2 t}-\frac{1}{(1+t)^2}$ is positive for $0<t<t_0$ and negative for $t>t_0$. Therefore, the function
	$$h(t)=\tanh t - \frac{t}{1+t}=\int_0^t \frac{1}{\cosh^2 t}-\frac{1}{(1+t)^2}dt$$ increases from $h(0)=0$ to $h(t_0)$ and then decreases to $h(\infty)=0$, hence is nonnegative. This completes the proof of the rightmost inequality in \eqref{main_ineq}.\\
To prove \eqref{ash>s}, note that for 
%let $k(t)=\sqrt{1+t^2}\cos t$. Since for 
$0<t<\pi/2$ the inequalities 
%$\frac{\tan t}{t}>1$ holds, we have
\begin{align*}
(1+t^2)\cos^2 t&=(1+t^2)\frac{\cos 2t +1}{2}<(1+t^2)\frac{1-2t^2+2t^4/3 + 1}{2}\\
&=1-\tfrac{1}{3}t^4(2-t^2)<1
	%k'(z)=\frac{z\cos z}{\sqrt{1+z^2}}-\sqrt{1+z^2}\sin z=z\sqrt{1+z^2}\cos z\left(\frac{1}{1+z^2}- \frac{\tan z}{z}\right)<0,
\end{align*}	
hold. Thus $\frac{1}{\sqrt{1+t^2}}>\cos t$ and we obtain \eqref{ash>s} by integration.\\
 To prove \eqref{s>at}, observe that for $0<t<1$
\begin{align*}
	(1+t^2)\cos t>(1+t^2)(1-t^2/2)=1+\frac{t^2(1-t^2)}{2}>1,
\end{align*}
and apply the same argument as above.
\end{proof}
Lemma \ref{new seiffert means > A} shows that the following inequalities hold if $x\neq y$
\begin{align}
\frac{x+y}{2}\leqslant M(x,y) \leqslant \frac{x-y}{2\sin\frac{x-y}{x+y}} \leqslant T(x,y) \leqslant  \frac{x-y}{2\tanh\frac{x-y}{x+y}}< \max(x,y).
\end{align}
Another set of means follows from the next lemma.
\begin{lemma}\label{new seiffert means < A}
	The inequalities
	\begin{equation}\label{main_ineq_2}
		t<\sinh t< \tan t<\artanh t <\frac{t}{1-t}
	\end{equation}
	hold for all $0<t<1$. Moreover, 
	\begin{equation}\label{sh<as}
		\sinh t<\arcsin t < \artanh t
	\end{equation}
	 holds for $0<t<1$.
	The functions $\arcsin t$ and $\tan t$
	 are not comparable in  $0<t<1$.
\end{lemma}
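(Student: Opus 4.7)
The plan follows the same template as Lemma \ref{new seiffert means > A}: each inequality to be proved concerns two functions that both vanish at $t=0$, so it suffices to establish the analogous inequality at the level of derivatives and integrate from $0$ to $t$.

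For the main chain \eqref{main_ineq_2} I would verify the pointwise bounds
\begin{equation*}
1 < \cosh t < \sec^2 t < \frac{1}{1-t^2} < \frac{1}{(1-t)^2}, \qquad 0<t<1.
\end{equation*}
The outer inequalities are immediate: the leftmost since $\cosh t>1$ for $t>0$, the rightmost from $(1-t)^2<(1-t)(1+t)=1-t^2$. The inequality $\sec^2 t<1/(1-t^2)$ reduces to $\cos^2 t>1-t^2$, equivalent to $|\sin t|<t$. The heart of the argument is $\cos^2 t\cosh t<1$. I would deduce this from the sharper classical estimate $\cos t\cosh t<1$ on $(0,\pi)$: setting $\phi(t)=1-\cos t\cosh t$, one checks that $\phi(0)=\phi'(0)=0$ and $\phi''(t)=2\sin t\sinh t>0$ on $(0,\pi)$, whence $\phi>0$ on $(0,\pi]$. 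Since $0<\cos t<1$ on $(0,\pi/2)$, multiplication by $\cos t$ yields $\cos^2 t\cosh t<\cos t<1$ on $(0,1)$. Integrating the derivative chain from $0$ to $t$ then gives \eqref{main_ineq_2}.

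For \eqref{sh<as}, the derivatives involved are $\cosh t$, $1/\sqrt{1-t^2}$, and $1/(1-t^2)$. The inequality $\arcsin t<\artanh t$ integrates the immediate $\sqrt{1-t^2}>1-t^2$ on $(0,1)$. For $\sinh t<\arcsin t$ I need $\cosh t<1/\sqrt{1-t^2}$: the already-established $\cos t\cosh t<1$ gives $\cosh t<1/\cos t$, while $\cos^2 t>1-t^2$ gives $\cos t>\sqrt{1-t^2}$ and hence $1/\cos t<1/\sqrt{1-t^2}$. Chaining and integrating delivers the bound.

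Finally, incomparability of $\arcsin$ and $\tan$ on $(0,1)$ is settled by exhibiting the two orderings. From the expansions $\arcsin t=t+t^3/6+O(t^5)$ and $\tan t=t+t^3/3+O(t^5)$, we have $\tan t>\arcsin t$ for small $t>0$. At the right endpoint $\arcsin 1=\pi/2\approx 1.5708>\tan 1\approx 1.5574$, and by continuity the reverse ordering persists on a left-neighbourhood of $1$ inside $(0,1)$. The only genuinely tricky step in the whole proof is $\cos^2 t\cosh t<1$; once the classical estimate $\cos t\cosh t<1$ is in hand via the $\phi''=2\sin t\sinh t$ trick, the remainder is routine derivative comparison and integration, exactly in the spirit of the preceding lemma.
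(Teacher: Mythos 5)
Your proof is correct, but it takes a genuinely different route from the paper's. The paper obtains the whole chain \eqref{main_ineq_2} for free from the chain \eqref{main_ineq} of Lemma \ref{new seiffert means > A}, using only the observation that each function in \eqref{main_ineq_2} is the inverse of the corresponding function in \eqref{main_ineq} and that reflecting increasing graphs through the main diagonal reverses pointwise inequalities; the same reflection of \eqref{ash>s} gives $\sinh t<\arcsin t$, and the reflection of \eqref{s>at} gives $\arcsin t<\tan t$ for $t<\sin 1$, which together with $\arcsin 1>\tan 1$ settles incomparability. You instead prove everything from scratch by comparing derivatives and integrating: the one nontrivial step, $\cos^2 t\cosh t<1$, you reduce to the classical $\cos t\cosh t<1$ via $\phi''(t)=2\sin t\sinh t>0$, and you reuse that same estimate (together with $\cos t>\sqrt{1-t^2}$) to get $\cosh t<1/\sqrt{1-t^2}$ and hence $\sinh t<\arcsin t$; your Taylor-expansion argument for $\tan t>\arcsin t$ near $0$, combined with $\arcsin 1>\tan 1$, also correctly establishes incomparability. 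The paper's argument is shorter because it leans entirely on the previous lemma; yours is self-contained, does not depend on Lemma \ref{new seiffert means > A} at all, and produces the independently useful intermediate inequality $\cos t\cosh t<1$ on $(0,\pi]$. All steps in your version check out, including the domain bookkeeping ($\cos t>0$ on $(0,1)\subset(0,\pi/2)$, and continuity at the endpoint $t=1$ for the incomparability claim).
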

\begin{proof} The inequalities \eqref{main_ineq_2} follow from \eqref{main_ineq} and the fact, that the graph of an inverse function is symmetric with respect to the main diagonal. The same argument applied to \eqref{ash>s} implies the first inequality in \eqref{sh<as}, while the second inequality can be obtained by integration of $\frac{1}{\sqrt{1-t^2}}< \frac{1}{1-t^2}$.\\
It follows from \eqref{s>at} and the remark about the graph of an inverse function that $\arcsin t<\tan t$ for $t<\sin 1$, while $\arcsin 1>\tan 1$.
 
\end{proof}
Lemma \ref{new seiffert means < A} implies the following chains of inequalities between means
\begin{align}
\frac{x+y}{2}>\frac{x-y}{2\sinh \frac{x-y}{x+y} }> P(x,y) >L(x,y)>\min(x,y)\\
\frac{x+y}{2}>\frac{x-y}{2\sinh \frac{x-y}{x+y} } >\frac{x-y}{2\tan \frac{x-y}{x+y}}>  L(x,y)>\min(x,y).
\end{align}
So all hyperbolic and inverse offspring of sine and tangent functions forms Seiffert-like means. We shall see in a while that both of them are much more fertile. 

\section{Integral transformation}

\begin{theorem}\label{operator I}
If $f\in\Sp{M}$ is concave and 
\begin{align}
I(f)(z)=\int_0^z \frac{f(t)}{t}dt,
\end{align}
then $I(f)$ is also concave and $f(z)\leqslant I(f)(z)\leqslant z$ for all $z\in(0,1)$.\\
Similarly, if $f$ is convex, then $I(f)$ is also convex and $f(z)\geqslant I(f)(z)\geqslant z$.

\end{theorem}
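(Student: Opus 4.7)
The plan is to exploit the Seiffert properties \eqref{eq:propertiesofSeiffert}, namely $\lim_{t\to 0^+} f(t)=0$ and $\lim_{t\to 0^+} f(t)/t=1$, together with the classical fact that for a concave function vanishing at the origin the ratio $g(t):=f(t)/t$ is nonincreasing (and nondecreasing when $f$ is convex). Once this monotonicity is in hand, everything reduces to elementary calculus on a monotone function whose boundary value at $0$ is known.

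First I would check that $I(f)$ is well defined by noting that $g$ extends continuously to $1$ at $t=0$ thanks to \eqref{eq:propertiesofSeiffert}, so the integrand is bounded near the origin and the improper integral converges for every $z\in(0,1)$.

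Next, in the concave case, I would verify that $g$ is nonincreasing on $(0,1)$: for $0<s<t<1$, writing $s$ as a convex combination of $\epsilon$ and $t$ and letting $\epsilon\to 0^+$, concavity of $f$ together with $f(0^+)=0$ yields $f(s)\geq (s/t)f(t)$, i.e.\ $g(s)\geq g(t)$. Since $I(f)'=g$ is then nonincreasing, $I(f)$ is concave. Monotonicity of $g$ combined with $\lim_{t\to 0^+}g(t)=1$ forces $g(z)\leq 1$ throughout $(0,1)$, so on $(0,z)$ one has $g(z)\leq g(t)\leq 1$; integrating this triple inequality from $0$ to $z$ gives at once $f(z)=zg(z)\leq I(f)(z)\leq z$.

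The convex case is entirely symmetric: concavity is replaced by convexity, so $g$ becomes nondecreasing with the same boundary value $g(0^+)=1$, hence $g\geq 1$; the derivative $I(f)'=g$ is then nondecreasing, making $I(f)$ convex, and the same triple-integration argument flips both inequalities to $f(z)\geq I(f)(z)\geq z$. The only delicate point in the whole argument is identifying the boundary value of $g$ at $0$: monotonicity alone guarantees a limit, but pinning it at exactly $1$---so that the identity function $\id(z)=z$ is the correct comparison---uses the Seiffert property in \eqref{eq:propertiesofSeiffert} rather than concavity alone.
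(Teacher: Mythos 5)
Your argument is correct and follows essentially the same route as the paper: extend $f$ by $f(0)=0$, use concavity to show the divided difference $f(t)/t$ is monotone with boundary value $1$ at the origin (the Seiffert property), deduce concavity of $I(f)$ from monotonicity of its derivative, and obtain the two-sided bound by integrating $f(z)/z\leqslant f(t)/t\leqslant 1$ over $(0,z)$. Your write-up is merely a bit more explicit about why $f(t)/t$ decreases and why its limit at $0$ equals $1$, which the paper compresses into ``$f'(0)=1$''.
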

\begin{proof}
First, note that setting $f(0)=0$ we extend $f$ to a concave function on $[0,1)$. Inequalities \eqref{ineq:basic} imply $f'(0)=1$ so $f(z)\leqslant z$ by concavity. Moreover, the divided difference $f(t)/t$ decreases which yields concavity of $I(f)$, and implies
$$f(z)=\int_0^z \frac{f(z)}{z}dt\leqslant \int_0^z \frac{f(t)}{t}dt\leqslant \int_0^z dt=z.$$
The proof in case of convex function is similar.
\end{proof}

The operator $I$ is monotone on the set of functions where it exists, and because $f_{\max}$ is concave and $f_{\min}$ is convex, we obtain the following corollary.
\begin{corollary}
If $f$ is a Seiffert function such that $I(f)$ exists, then $I(f)$ is also a Seiffert function.
\end{corollary}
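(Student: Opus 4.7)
The plan is to derive the claim from Theorem~\ref{operator I} applied at the two extremal members of $\Sp{S}$, together with the manifest monotonicity of the operator $I$: since $t>0$ on the domain of integration, $g_1\leqslant g_2$ pointwise implies $I(g_1)(z)\leqslant I(g_2)(z)$.

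First I would check that the boundary functions $f_{\max}(z)=\frac{z}{1+z}$ and $f_{\min}(z)=\frac{z}{1-z}$ are respectively concave and convex on $(0,1)$; this is immediate from
\[
f_{\max}''(z)=-\frac{2}{(1+z)^3}<0,\qquad f_{\min}''(z)=\frac{2}{(1-z)^3}>0.
\]
Applying the two halves of Theorem~\ref{operator I} then yields the sandwich
\[
\frac{z}{1+z}=f_{\max}(z)\leqslant I(f_{\max})(z)\leqslant z\leqslant I(f_{\min})(z)\leqslant f_{\min}(z)=\frac{z}{1-z}.
\]
(As a sanity check, $I(f_{\max})(z)=\log(1+z)$ and $I(f_{\min})(z)=-\log(1-z)$, and all four inequalities are classical.)

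Next, for any $f\in\Sp{S}$ the defining inequality \eqref{ineq:basic} gives $f_{\max}\leqslant f\leqslant f_{\min}$ pointwise, and monotonicity of $I$ propagates this to $I(f_{\max})\leqslant I(f)\leqslant I(f_{\min})$. Chaining with the previous display produces
\[
\frac{z}{1+z}\leqslant I(f)(z)\leqslant\frac{z}{1-z},
\]
which is precisely the Seiffert inequality for $I(f)$, so $I(f)\in\Sp{S}$.

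There is essentially no obstacle: the argument is a three-line bookkeeping exercise packaging Theorem~\ref{operator I} together with the trivial monotonicity of $I$. The only small prerequisite beyond the theorem itself is the concavity/convexity of the two extremal Seiffert functions, which is a one-line computation. The hypothesis that $I(f)$ exists is used only to ensure that the integral defining $I(f)(z)$ is finite for every $z\in(0,1)$; the bounding integrals $I(f_{\max})$ and $I(f_{\min})$ are manifestly finite on $(0,1)$, so the comparison $I(f_{\max})\leqslant I(f)\leqslant I(f_{\min})$ is meaningful wherever $I(f)$ is defined.
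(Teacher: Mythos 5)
Your proof is correct and is exactly the argument the paper intends: the sentence preceding the corollary states that the result follows from the monotonicity of $I$ together with the concavity of $f_{\max}$ and convexity of $f_{\min}$, which via Theorem~\ref{operator I} give $f_{\max}\leqslant I(f_{\max})$ and $I(f_{\min})\leqslant f_{\min}$, and then the sandwich you wrote. No differences worth noting.
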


In particular we conclude that the following functions are Seiffert functions
\begin{align*}
\Si_n(z)&=\begin{cases}\sin z & n=0\\\displaystyle\int_0^z \frac{\Si_{n-1}(t)}{t}dt & n>0\end{cases},\\
\ASi_n(z)&=\begin{cases}\arcsin z & n=0\\\displaystyle\int_0^z \frac{\ASi_{n-1}(t)}{t}dt & n>0\end{cases},\\
\Ti_n(z)&=\begin{cases}\tan z & n=0\\\displaystyle\int_0^z \frac{\Ti_{n-1}(t)}{t}dt & n>0\end{cases},\\
\ATi_n(z)&=\begin{cases}\arctan z & n=0\\\displaystyle\int_0^z \frac{\ATi_{n-1}(t)}{t}dt & n>0\end{cases},\\
\SHi_n(z)&=\begin{cases}\sinh z & n=0\\\displaystyle\int_0^z \frac{\SHi_{n-1}(t)}{t}dt & n>0\end{cases},\\
\ASHi_n(z)&=\begin{cases}\arsinh z & n=0\\\displaystyle\int_0^z \frac{\ASHi_{n-1}(t)}{t}dt & n>0\end{cases},\\
\THi_n(z)&=\begin{cases}\tanh z & n=0\\\displaystyle\int_0^z \frac{\THi_{n-1}(t)}{t}dt & n>0\end{cases},\\
\ATHi_n(z)&=\begin{cases}\artanh z & n=0\\\displaystyle\int_0^z \frac{\ATHi_{n-1}(t)}{t}dt & n>0\end{cases}.
\end{align*}
Since all functions mentioned in Lemma \ref{new seiffert means > A} are concave, we have the following grid of inequalities between means:
%$$\begin{matrix}
	%&\dots\ <&\frac{x-y}{2\ASHi_{n+1} \frac{x-y}{x+y}}&<&\frac{x-y}{2\ASHi_{n} \frac{x-y}{x+y}}&<&\dots&<&\frac{x-y}{2\arsinh \frac{x-y}{x+y}}&=M(x,y)\\
	%&&\wedge&&\wedge&&&&\wedge&\\
	%&\dots\ <&\frac{x-y}{2\Si_{n+1} \frac{x-y}{x+y}}&<&\frac{x-y}{2\Si_{n} \frac{x-y}{x+y}}&<&\dots&<&\frac{x-y}{2\sin \frac{x-y}{x+y}}&\\
	%\frac{x+y}{2}<&&\wedge&&\wedge&&&&\wedge&\\
	%&\dots\ <&\frac{x-y}{2\ATi_{n+1} \frac{x-y}{x+y}}&<&\frac{x-y}{2\ATi_{n} \frac{x-y}{x+y}}&<&\dots&<&\frac{x-y}{2\arctan \frac{x-y}{x+y}}&=T (x,y)\\
	%&&\wedge&&\wedge&&&&\wedge&\\
	%&\dots\ <&\frac{x-y}{2\THi_{n+1} \frac{x-y}{x+y}}&<&\frac{x-y}{2\THi_{n} \frac{x-y}{x+y}}&<&\dots&<&\frac{x-y}{2\tanh \frac{x-y}{x+y}}&\\
%\end{matrix}
%$$
$$\begin{matrix}
	&\dots\ <&\frac{x-y}{2\ASHi_{2} \frac{x-y}{x+y}}&<&\frac{x-y}{2\ASHi_{1} \frac{x-y}{x+y}}&<&\frac{x-y}{2\arsinh \frac{x-y}{x+y}}&=M(x,y)\\
	&&\wedge&&\wedge&&\wedge&\\
	&\dots\ <&\frac{x-y}{2\Si_{2} \frac{x-y}{x+y}}&<&\frac{x-y}{2\Si_{1} \frac{x-y}{x+y}}&<&\frac{x-y}{2\sin \frac{x-y}{x+y}}&\\
	\frac{x+y}{2}<&&\wedge&&\wedge&&\wedge&\\
	&\dots\ <&\frac{x-y}{2\ATi_{2} \frac{x-y}{x+y}}&<&\frac{x-y}{2\ATi_{1} \frac{x-y}{x+y}}&<&\frac{x-y}{2\arctan \frac{x-y}{x+y}}&=T (x,y)\\
	&&\wedge&&\wedge&&\wedge&\\
	&\dots\ <&\frac{x-y}{2\THi_{2} \frac{x-y}{x+y}}&<&\frac{x-y}{2\THi_{1} \frac{x-y}{x+y}}&<&\frac{x-y}{2\tanh \frac{x-y}{x+y}}&\\
\end{matrix}.
$$
The  horizontal lines are granted by Theorem \ref{operator I} and vertical ones follow from Lemma \ref{new seiffert means > A}.

Before providing a similar picture for the other four functions, recall that there is no comparison between arcsine and tangent functions. Nevertheless, the operator $I$ quickly rectifies this irregularity.
\begin{lemma}\label{ASi<Ti}
For $0<z<1$ we have
$$\ASi_1(z)=\int_0^z \frac{\arcsin t}{t}dt < \int_0^z \frac{\tan t}{t}dt=\Ti_1(z).$$
\end{lemma}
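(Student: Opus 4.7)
The plan is to set $G(z) = \Ti_1(z) - \ASi_1(z)$. Then $G(0) = 0$ and $G'(z) = (\tan z - \arcsin z)/z$. By Lemma \ref{new seiffert means < A} and its closing remark, $\tan z > \arcsin z$ on $(0, \sin 1)$, while $\tan 1 < \arcsin 1 = \pi/2$. A brief sign analysis of $(\tan z - \arcsin z)' = \sec^2 z - (1-z^2)^{-1/2}$ (equivalently, of $\cos^4 z + z^2 - 1$) shows that $\tan z - \arcsin z$ is concave-down on $(0,1)$ with a unique interior zero $z_\star \in (\sin 1, 1)$, positive to its left and negative to its right. Hence $G$ strictly increases on $(0, z_\star)$ and strictly decreases on $(z_\star, 1)$, which together with $G(0) = 0$ yields $G(z) > 0$ on $(0, z_\star]$ and $G(z) > G(1)$ on $[z_\star, 1)$. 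The whole lemma therefore reduces to the single inequality $G(1) \geq 0$.

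To evaluate $\ASi_1(1)$ exactly, I would substitute $t = \sin\theta$, turning the integral into $\int_0^{\pi/2} \theta \cot\theta\, d\theta$. One integration by parts with $u = \theta$, $v = \ln\sin\theta$ (the boundary term $\theta\ln\sin\theta$ vanishes at both endpoints), together with the classical Euler evaluation $\int_0^{\pi/2}\ln\sin\theta\,d\theta = -\tfrac{\pi\ln 2}{2}$, gives $\ASi_1(1) = \tfrac{\pi \ln 2}{2}$. For $\Ti_1(1)$, the Maclaurin series of $\tan$ on its radius of convergence $\pi/2$ has only positive coefficients, so $\tan t \geq t + t^3/3$ on $[0,1]$, and therefore
\[
\Ti_1(1) \geq \int_0^1\!\left(1 + \tfrac{t^2}{3}\right)dt = \tfrac{10}{9}.
\]

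The elementary bounds $\pi/2 < 11/7$ (Archimedes) and $\ln 2 < 7/10$ (since $e^{7/10} > 2$) then yield $\tfrac{\pi\ln 2}{2} < \tfrac{11}{10} < \tfrac{10}{9} \leq \Ti_1(1)$, so $G(1) > 0$, completing the proof.

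The main obstacle is the endpoint comparison $G(1) \geq 0$. The monotonicity step is essentially mechanical once the sign pattern of $\tan z - \arcsin z$ is recorded, but no uniform pointwise inequality between the two integrands is available on $(0,1)$, so the quantitative inequality at $z = 1$ must be established by marrying a closed-form evaluation of the $\arcsin$-integral with an explicit series lower bound for the $\tan$-integral. The numerical margin $\tfrac{10}{9} - \tfrac{\pi\ln 2}{2} \approx 0.022$ is modest enough that the auxiliary estimates on $\pi$ and $\ln 2$ must be chosen with some care to avoid losing sign.
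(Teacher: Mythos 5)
Your proof follows the same skeleton as the paper's: both show that the integrand difference $\tan t-\arcsin t$ changes sign exactly once on $(0,1)$ (positive then negative), deduce that the difference of the two integrals is unimodal, and thereby reduce the lemma to a single endpoint check at $z=1$. Two remarks. First, your justification of the sign pattern contains a misstatement: $\tan z-\arcsin z$ is \emph{not} concave on $(0,1)$ --- its second derivative $2\sec^2 z\tan z - z(1-z^2)^{-3/2}$ is positive near $0$ --- so "concave-down" must be replaced by the weaker (and sufficient) claim that $q'(z)=\sec^2 z-(1-z^2)^{-1/2}$ changes sign exactly once, from positive to negative. That step is not automatic; the paper secures it by observing that $\cos^2 z$ is convex and $\sqrt{1-z^2}$ concave on the relevant subinterval $(\pi/4,1)$, so their graphs cross at most twice, while the sign of $q$ on $(0,\sin 1)$ is already known from Lemma \ref{new seiffert means < A}. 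You should import that device (or an equivalent one) rather than appeal to concavity. Second, your treatment of the endpoint is genuinely better than the paper's: where the paper simply quotes a numerical value for $u(1)$, you evaluate $\ASi_1(1)=\tfrac{\pi\ln 2}{2}$ in closed form via the Euler integral $\int_0^{\pi/2}\ln\sin\theta\,d\theta=-\tfrac{\pi\ln 2}{2}$, bound $\Ti_1(1)\geqslant\int_0^1(1+t^2/3)\,dt=\tfrac{10}{9}$ using the positivity of the Maclaurin coefficients of $\tan$, and close the gap with $\tfrac{\pi\ln 2}{2}<\tfrac{11}{7}\cdot\tfrac{7}{10}=\tfrac{11}{10}<\tfrac{10}{9}$; all of these estimates check out, and they turn a numerical verification into a rigorous one. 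With the sign-change step repaired as above, your argument is complete.
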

\begin{proof}
Let $q(t)=\arcsin t - \tan t$. As shown in the proof of Lemma \ref{new seiffert means < A}, for $t<\sin 1\approx 0.841$ the inequality $q(t)<0$ holds. For $t>\pi/4\approx 0.785$  we have $q'(t)=\frac{\cos^2 t-\sqrt{1-t^2}}{\sqrt{1-t^2}\cos^2 t}$, $q'(\pi/4)<0$ and $q'(1)>0$. Since $\cos^2 t$ is convex and $\sqrt{1-t^2}$ concave, their graphs intersect exactly in one point. Thus $q(t)$ changes sign exactly once in the interval $(0,1)$. This implies that the function $u(z)=\int_0^z q(t)/t dt$ has exactly one local minimum, and since $u(z)=0$ and $u(1)\approx -0.016$, it is negative, which completes the proof.
\end{proof}
Now Theorem \ref{operator I} together with Lemmae \ref{new seiffert means < A} and \ref{ASi<Ti} yield
$$\begin{matrix}
	\frac{x+y}{2}> &\frac{x-y}{2\sinh \frac{x-y}{x+y}}&>&\frac{x-y}{2\SHi_{1} \frac{x-y}{x+y}}&>&\frac{x-y}{2\SHi_{2} \frac{x-y}{x+y}}&>\dots\\
	&\vee&&\vee&&\vee&\\
	P(x,y)= &\frac{x-y}{2\arcsin \frac{x-y}{x+y}}&>&\frac{x-y}{2\ASi_{1} \frac{x-y}{x+y}}&>&\frac{x-y}{2\ASi_{2} \frac{x-y}{x+y}}&>\dots\\
	&&&\vee&&\vee&\\
	\vee &\frac{x-y}{2\tan \frac{x-y}{x+y}}&>&\frac{x-y}{2\Ti_{1} \frac{x-y}{x+y}}&>&\frac{x-y}{2\Ti_{2} \frac{x-y}{x+y}}&>\dots\\
	&\vee&&\vee&&\vee&\\
	L(x,y)= &\frac{x-y}{2\artanh \frac{x-y}{x+y}}&>&\frac{x-y}{2\ATHi_{1} \frac{x-y}{x+y}}&>&\frac{x-y}{2\ATHi_{2} \frac{x-y}{x+y}}&>\dots\\
	&\vee&&\vee&&\vee&\\
\end{matrix}.
$$
\section{Schur convexity}
Given a symmetric, convex set $D\subset \mathbb{R}^2$, a partial order in $D$ is defined by
\begin{equation}\label{def:majoryzacja}
(x_1,y_1)\prec(x_2,y_2) \Leftrightarrow x_1+y_1=x_2+y_2 \text{ and } \max(x_1,y_1)\leqslant \max(x_2,y_2).
\end{equation}
A symmetric function $h:D\to\mathbb{R}$ is called Schur-convex if it preserves this partial order, i.e. if $(x_1,y_1)\prec(x_2,y_2)$ yields $h(x_1,y_1)\leqslant h(x_2,y_2)$, and Schur-concave if the partial order gets reversed.\\
Setting $c=(x_1+y_1)/2,\, t_1=|x_1-y_1|/2,\, t_2=|x_2-y_2|/2$, we see that $(x_1,y_1)\prec(x_2,y_2)$ is equivalent to $t_1\leqslant t_2$, and thus we can say that $h$ is Schur-convex (resp. concave) if and only if for all $c$ the function $h(c+t,c-t)$ increases (resp. decreases) for $t>0$, cf. \cite[I.3.A.2.b]{Mar}.
It will be useful to introduce the strict Schur-convexity: it is when the inequality in \eqref{def:majoryzacja} is strict whenever $(x_1,y_1)$ is not a permutation of $(x_2,y_2)$. In this case all reasoning in this section remains valid with the adverb 'strictly' added to all mentioned properties.

Schur convexity of means is an interesting subject investigated by many mathematicians (see e.g. \cite{ChuXia,Shi1,Cul} and the references therein).

 In case of a homogeneous symmetric mean, the Schur-convexity condition may be written in a very simple form: $M(x,y)$ is Schur-convex (resp. concave) if and only if the function $s(t)=M(1+t,1-t)$ increases in the unit interval. Let us see how this condition translates into the language of Seiffert functions. We have
$	s(t)=M(1+t,1-t)=\frac{t}{f_M(t)}	$, so we have the following.
\begin{theorem}
	A mean $M$ is Schur-convex (resp. concave) if and only if the function $f_M(z)/z$ decreases (res. increases).
\end{theorem}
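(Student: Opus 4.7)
The theorem is a direct restatement of the characterization just derived, so the work is essentially cosmetic. The preceding paragraph reduces Schur-convexity (resp. concavity) of a homogeneous symmetric mean $M$ to monotonicity of $s(t) := M(1+t,1-t)$, and the identity $s(t) = t/f_M(t)$ is immediate from $M = S_{f_M}$ combined with \eqref{eq:definition of S_f}. All that remains is to translate monotonicity of $s(t)=t/f_M(t)$ into monotonicity of $f_M(t)/t$.

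To do this, I would first note that $f_M(t)/t > 0$ on $(0,1)$: the Seiffert inequality \eqref{ineq:basic} gives $f_M(t) \geqslant t/(1+t) > 0$, so dividing by $t>0$ preserves strict positivity. Hence we may write $s(t) = \bigl(f_M(t)/t\bigr)^{-1}$, i.e.\ $s$ is the image of $f_M/\mathrm{id}$ under the strictly decreasing bijection $u \mapsto 1/u$ on $(0,\infty)$. Consequently $s$ is increasing on $(0,1)$ if and only if $t \mapsto f_M(t)/t$ is decreasing there, and $s$ is decreasing on $(0,1)$ if and only if $f_M(t)/t$ is increasing. Combining these equivalences with the pre-theorem characterization ($M$ is Schur-convex $\Leftrightarrow$ $s$ is increasing, and analogously for Schur-concave) yields both implications of the theorem.

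The strict version mentioned in the earlier paragraph follows by the same argument with 'decreasing' and 'increasing' replaced by their strict counterparts, since $u\mapsto 1/u$ is a strictly decreasing bijection. There is no real obstacle: the whole proof is a one-liner once one observes positivity of $f_M(t)/t$, which is built into the definition of a Seiffert function.
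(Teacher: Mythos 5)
Your proposal is correct and follows exactly the paper's own route: the author derives $s(t)=M(1+t,1-t)=t/f_M(t)$ in the paragraph preceding the theorem and treats the passage from monotonicity of $s$ to the opposite monotonicity of $f_M(t)/t$ as immediate, which is precisely the reciprocal-map observation you spell out. Your added remark on the positivity of $f_M(t)/t$ (from \eqref{ineq:basic}) just makes explicit what the paper leaves tacit.
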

Note that if a Seiffert function $f$ is concave (resp. convex), then its divided difference $f(z)/z$ decreases (resp. increases), so we have
\begin{corollary}
	If $f\in\Sp{S}$ is concave (resp. convex), then the mean $S_f(x,y)$ is Schur-convex (resp. Schur-concave).
\end{corollary}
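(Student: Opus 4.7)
The plan is to reduce the statement to the preceding theorem, which characterizes Schur-convexity of $M$ via monotonicity of $f_M(z)/z$. Thus it suffices to show that if $f \in \Sp{S}$ is concave on $(0,1)$, then $f(z)/z$ is decreasing, and dually for convex $f$.

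First I would extend $f$ continuously to the closed interval $[0,1)$ by setting $f(0) = 0$. This is legitimate because of the limit property $\lim_{z \to 0} f(z) = 0$ recorded in \eqref{eq:propertiesofSeiffert}, and the extension preserves concavity (resp. convexity) on $[0,1)$. Next I would invoke the elementary fact that for a concave function $f$ on $[0,1)$ with $f(0)=0$, the chord slope from the origin $z \mapsto f(z)/z$ is nonincreasing: indeed, for $0 < z_1 < z_2 < 1$, concavity applied to the convex combination $z_1 = (1 - z_1/z_2)\cdot 0 + (z_1/z_2)\cdot z_2$ gives
\begin{equation*}
f(z_1) \geqslant \left(1 - \frac{z_1}{z_2}\right) f(0) + \frac{z_1}{z_2} f(z_2) = \frac{z_1}{z_2} f(z_2),
\end{equation*}
which rearranges to $f(z_1)/z_1 \geqslant f(z_2)/z_2$.

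Since $f_{S_f} = f$ by the corollary following Theorem \ref{thm:M_f_M}, the monotonicity of $f(z)/z$ is precisely the condition from the preceding theorem characterizing Schur-convexity of $S_f$. The convex case is handled by reversing all inequalities in the chord-slope argument. The whole argument is essentially bookkeeping, so there is no real obstacle; the only point deserving a moment of care is the continuous extension of $f$ to $0$, which is what allows the concavity hypothesis, stated only on the open interval, to produce a usable chord inequality with the origin.
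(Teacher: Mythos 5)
Your proposal is correct and follows the same route as the paper: the paper derives the corollary from the preceding theorem by the one-line observation that a concave (resp.\ convex) Seiffert function has decreasing (resp.\ increasing) divided difference $f(z)/z$, which is exactly the chord-slope fact you prove. You merely make explicit the extension $f(0)=0$ and the convex-combination computation that the paper leaves implicit (and uses in the same way in the proof of Theorem \ref{operator I}).
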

\begin{corollary}
	The means
	$$\frac{x-y}{2\Si_n\frac{x-y}{x+y}},\quad \frac{x-y}{2\ASHi_n\frac{x-y}{x+y}},\quad \frac{x-y}{2\ATi_n\frac{x-y}{x+y}},\quad \frac{x-y}{2\THi_n\frac{x-y}{x+y}}$$
	are strictly Schur-convex, while means
	$$\frac{x-y}{2\Ti_n\frac{x-y}{x+y}},\quad \frac{x-y}{2\ATHi_n\frac{x-y}{x+y}},\quad \frac{x-y}{2\ASi_n\frac{x-y}{x+y}},\quad \frac{x-y}{2\SHi_n\frac{x-y}{x+y}}$$
	are strictly Schur-concave.	
	
\end{corollary}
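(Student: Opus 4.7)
The plan is to invoke the preceding corollary, which reduces strict Schur-convexity (resp.\ concavity) of $S_f$ to strict concavity (resp.\ convexity) of the Seiffert function $f$. So I need only verify that each of the eight families has the correct convexity type on $(0,1)$, and I will do this by induction on $n$.

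For the base case $n=0$, I would compute second derivatives directly on $(0,1)$. The four functions claimed to produce Schur-convex means are strictly concave: $\sin''t=-\sin t<0$; $(\arsinh)''t=-t(1+t^2)^{-3/2}<0$; $(\arctan)''t=-2t(1+t^2)^{-2}<0$; and $(\tanh)''t=-2\tanh t\operatorname{sech}^2 t<0$. The other four are strictly convex by analogous computations: $\tan''t=2\sin t/\cos^3 t>0$; $(\artanh)''t=2t(1-t^2)^{-2}>0$; $(\arcsin)''t=t(1-t^2)^{-3/2}>0$; and $\sinh''t=\sinh t>0$.

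For the inductive step, I would apply Theorem \ref{operator I}, which asserts that $I$ preserves concavity and convexity. One should note that the proof of that theorem actually delivers strict concavity (resp.\ convexity) of $I(f)$ whenever $f$ is strictly concave (resp.\ convex): the argument shows that $I(f)$ inherits its convexity type from the monotonicity of the divided difference $f(t)/t$, and strict concavity of $f$ makes this divided difference strictly decreasing, forcing $I(f)$ to be strictly concave (and symmetrically for the convex case). Applying this step-by-step to the recursive definitions gives strict concavity of $\Si_n,\ASHi_n,\ATi_n,\THi_n$ and strict convexity of $\Ti_n,\ATHi_n,\ASi_n,\SHi_n$ for every $n\geqslant 0$.

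Finally, the preceding corollary upgrades each of these concavity/convexity facts to the desired strict Schur-convexity or Schur-concavity of the associated mean $S_{f}$, which completes the proof. The only subtle point, and the one I would emphasize, is the strictness promotion in the inductive step; everything else is a matter of quoting the two already-established results.
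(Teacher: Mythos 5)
Your proposal is correct and is exactly the argument the paper intends: the corollary is stated as an immediate consequence of the preceding corollary (concave/convex Seiffert function yields Schur-convex/Schur-concave mean) together with Theorem \ref{operator I}, and your base-case derivative checks plus the induction via $I$ fill in precisely those details. Your observation about promoting the concavity/convexity preservation to the strict version (via the strictly decreasing divided difference $f(t)/t$, which is $I(f)'$) is the right way to justify the word \emph{strictly} in the statement, and it matches the paper's remark that all the reasoning in that section carries over verbatim to the strict setting.
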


\section{Means with varying arguments}\label{sec:Means with varying arguments}
Denote by $\widehat{f}(z)$ the function $f(z)/z$.
For $0<t<1, 0<z<1$ and $f\in \Sp{S}$ we have the inequalities
$$\frac{z}{1+z}<\frac{1}{t}\frac{tz}{1+tz}\leqslant \frac{f(tz)}{t}\leqslant \frac{1}{t}\frac{tz}{1-tz}<\frac{z}{1-z},$$
which shows that $f_t(z)=f(tz)/t$ are also Seiffert functions. Note that $\lim_{t\to 0} f_t(z)=z$, thus this process defines a homotopy between $f$ and $\id$. It is a matter of simple transformation to verify, that if $f$ corresponds to a mean $M$, then $f_t$ maps to the mean 
$$M_t(x,y)=M\left(\frac{x+y}{2}-t\frac{x-y}{2},\frac{x+y}{2}+t\frac{x-y}{2}\right).$$
There are numerous papers on comparison between means of the form $M_t$ and other classical means (see e.g. \cite{Chu0,Chu2,Chu4,Chu5,Gao,Neu,Wang1}). \\
The most popular problem is  formulated as follows: given two means $M,N$ satisfying $A<M<N$, find optimal $p,q$ such that $N_p\leqslant M\leqslant N_q$. To avoid double subscripts, let $m$ and $n$ be the Seiffert functions of $M$ and $N$, respectively. Then, by Corollary \ref{cor:antimonotonicity} the inequalities $N_p<M<N_q$ are equivalent to $n_p>m>n_q$. The inequality $A<N$ implies $n(z)<z$.   Assume additionally that the function $\widehat{n}(z)$ is strictly decreasing, (in case of classical means,  their Seiffert functions are usually concave, so this condition is satisfied). Then the following inequalities are equivalent:
$$n_p(z)>m(z)>n_q(z)\equiv\frac{n(pz)}{pz}>\frac{m(z)}{z}>\frac{n(qz)}{qz}\equiv p<\frac{\widehat{n}^{-1}(\widehat{m}(z))}{z}<q.$$
Thus we have proven the following theorem.
\begin{theorem}
	Let $M$ and $N$ be two means with Seiffert functions $m$ and $n$, respectively. Suppose that $\widehat{n}(z)$ is strictly monotone and let $p_0=\inf\limits_z \frac{\widehat{n}^{-1}(\widehat{m}(z))}{z}$ and $q_0=\sup\limits_z\frac{\widehat{n}^{-1}(\widehat{m}(z))}{z}$. \\
	If $A(x,y)<M(x,y)<N(x,y)$ for all $x\neq y$ then the inequalities
		$$N_p(x,y)\leqslant M(x,y)\leqslant N_q(x,y)$$
	hold if and only if $p\leqslant p_0$ and $q\geqslant q_0$.\\
	If $N(x,y)<M(x,y)<A(x,y)$ for all $x\neq y$ then the inequalities
		$$N_q(x,y)\leqslant M(x,y)\leqslant N_p(x,y)$$
	hold if and only if $p\leqslant p_0$ and $q\geqslant q_0$.
\end{theorem}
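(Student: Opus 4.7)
The plan is to formalize the chain of equivalences already sketched in the paragraph preceding the theorem, being careful about strict versus non-strict inequalities and about which direction $\widehat{n}$ is monotone in each case.

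First I would record the key identity $\widehat{n_p}(z)=n_p(z)/z=n(pz)/(pz)=\widehat{n}(pz)$, which follows at once from the definition $n_p(z)=n(pz)/p$. Then, using Corollary~\ref{cor:antimonotonicity}, I would translate the target inequalities between means into reversed inequalities between Seiffert functions: in the first case, $N_p(x,y)\leqslant M(x,y)\leqslant N_q(x,y)$ for all $x,y$ is equivalent to $n_p(z)\geqslant m(z)\geqslant n_q(z)$ for all $z\in(0,1)$, and after dividing by $z$ this becomes $\widehat{n}(pz)\geqslant\widehat{m}(z)\geqslant\widehat{n}(qz)$.

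Next I would settle the monotonicity issue. The Seiffert properties \eqref{eq:propertiesofSeiffert} give $\widehat{n}(0^+)=1$. If $A<N$, then $n(z)<z$, so $\widehat{n}(z)<1$, forcing the strictly monotone function $\widehat{n}$ to be \emph{decreasing}; if $N<A$, then $\widehat{n}(z)>1$ and $\widehat{n}$ must be \emph{increasing}. In the decreasing case, $\widehat{n}(pz)\geqslant\widehat{m}(z)$ is equivalent to $pz\leqslant\widehat{n}^{-1}(\widehat{m}(z))$, i.e.\ to $p\leqslant\widehat{n}^{-1}(\widehat{m}(z))/z$; analogously $\widehat{m}(z)\geqslant\widehat{n}(qz)$ is equivalent to $q\geqslant\widehat{n}^{-1}(\widehat{m}(z))/z$. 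Taking infimum and supremum over $z$, both inequalities hold for all $z$ if and only if $p\leqslant p_0$ and $q\geqslant q_0$, which gives the first case. For the second case the argument is identical except that all monotonicity-based inversions flip in the same way, and the target inequality between means has $N_p$ and $N_q$ swapped (which is exactly the swap that the reversed monotonicity of $\widehat{n}$ produces after translation), so the final criterion $p\leqslant p_0$, $q\geqslant q_0$ is the same.

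The routine but slightly delicate step is passing from the pointwise equivalence to the bound on $p$ and $q$: because $p_0$ and $q_0$ are defined as $\inf$ and $\sup$, one has to observe that $p\leqslant p_0$ implies $p\leqslant\widehat{n}^{-1}(\widehat{m}(z))/z$ for every $z$ (and similarly for $q_0$), so no continuity or attainment hypothesis is needed. I do not anticipate a genuine obstacle; the only place that requires care is the case distinction according to the direction of monotonicity of $\widehat{n}$, which is what motivates the statement keeping the same form $p\leqslant p_0$, $q\geqslant q_0$ in both cases while interchanging the roles of $N_p$ and $N_q$ in the conclusion.
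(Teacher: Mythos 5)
Your proposal is correct and follows essentially the same route as the paper: translate via Corollary \ref{cor:antimonotonicity}, use $\widehat{n_p}(z)=\widehat{n}(pz)$, and invert the strictly monotone $\widehat{n}$ to isolate $p$ and $q$. Your explicit derivation that the sign of $N-A$ forces the direction of monotonicity of $\widehat{n}$ is a small tightening of the paper's argument (which simply assumes $\widehat{n}$ decreasing in the leading case), but the approach is the same.
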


To illustrate this theorem, let us consider two examples featuring the contraharmonic mean $C(x,y)=\frac{x^2+y^2}{x+y}$.
\begin{example}
	Let $M(x,y)=\frac{x^2+xy+y^2}{x+\sqrt{xy}+y}$. It is known (cf. \cite{AW1}) that for $x\neq y$ 
	$$A(x,y)<M(x,y)<C(x,y)$$
	holds. We have
	$$\widehat{m}(z)=\frac{2+\sqrt{1-z^2}}{3+z^2},\ \widehat{c}(z)=\frac{1}{1+z^2},\ \frac{\widehat{c}^{-1}(\widehat{m}(z))}{z}=\sqrt{\frac{1-\sqrt{1-z^2}}{z^2}}.$$
	The function $1-\sqrt{1-u}$ is convex, so its divided difference increases from  $1/2$ to $1$, therefore we obtain
	$$C_{\sqrt{2}/2} (x,y)\leqslant M(x,y)\leqslant C(x,y).$$
\end{example}
\begin{example}\label{ex:AGC}
	The contraharmonic mean belongs to the family of Gini means defined in general case by $G(r,s;x,y)=\left(\frac{x^r+y^r}{x^s+y^s}\right)^{1/(r-s)}$. They are increasing with respect to parameters $r,s$, thus for $0<\alpha<2$ we have
	$$A(x,y)=G(1,0;x,y)<G(1,\alpha;x,y)<G(1,2;x,y)=C(x,y).$$
	Fix $\alpha$ and let $M=G(1,\alpha)$. An easy calculation shows that
	$$\frac{\widehat{c}^{-1}(\widehat{m}(z))}{z}=2^{-\frac{1}{2(\alpha-1)}}\sqrt{\frac{\left[{(1+z)^\alpha+(1-z)^\alpha}\right]^{1/(\alpha-1)}-2^{1/(\alpha-1)}}{z^2}}.$$
	We shall show that the function under the square root increases. To this end, we shall use the following version of de l'Hospital's rule (\cite{Pin}): if $f$ and $g$ are differentiable functions  $g'(x)\neq 0$ and such that $(f'/g')(x)$ increases (decreases), then so does the divided difference $\frac{f(x)-f(a)}{g(x)-g(a)}$.\\
	Let $f(x)=\left[{(1+z)^\alpha+(1-z)^\alpha}\right]^{1/(\alpha-1)}$ and $g(x)=z^2$. We intend to show that $(f'/g')(z)$ increases in $(0,1)$. We have
	\begin{align*}\label{eq:}
		\frac{f'}{g'}(z)&	=\left[{(1+z)^\alpha+(1-z)^\alpha}\right]^{(2-\alpha)/(\alpha-1)}\times \frac{\alpha}{2(\alpha-1)}\frac{{(1+z)^{\alpha-1}-(1-z)^{\alpha-1}}}{z}\\
		&	=: h_1(z)\times \frac{\alpha}{2(\alpha-1)}\frac{h_2(z)}{z}.
	\end{align*}
	Case 1: $1<\alpha<2$. The power function $z^\alpha$ is convex, so $(1+z)^\alpha+(1-z)^\alpha$ increases and so does $h_1(z)$. The function $h_2(z)$ is positive and convex, so its divided difference increases, thus $f'/g'$ increases being a product of positive increasing functions.
	
	\noindent Case 2: $0<\alpha<1$. The power function $z^\alpha$ is concave, so $(1+z)^\alpha+(1-z)^\alpha$ decreses, thus $h_1(z)$ increases. The function $h_2(z)$ is negative and concave, so its divided difference decreases, and the negative factor $\frac{\alpha}{2(\alpha-1)}$ turns it into positive increasing. Thus again $f'/g'$ increases.
	
Therefore 
	$$\sqrt{\frac{\alpha}{2}}= \lim_{z\to 0}\frac{\widehat{c}^{-1}(\widehat{m}(z))}{z}\leqslant \frac{\widehat{c}^{-1}(\widehat{m}(z))}{z}\leqslant \frac{\widehat{c}^{-1}(\widehat{m}(1))}{1}=1$$
and we obtain the optimal inequalities
$$C\left(\frac{x+y}{2}+\sqrt{\frac{\alpha}{2}}\frac{x-y}{2},\frac{x+y}{2}-\sqrt{\frac{\alpha}{2}}\frac{x-y}{2}\right)\leqslant G(1,\alpha;x,y)\leqslant C(x,y).$$
	
\end{example}
Using the same technique as in Example \ref{ex:AGC} we obtain the following results for power means $G(0,\alpha;x,y)$:
\begin{example}
Let $N(x,y)=G(0,2;x,y)$ denote the root-mean square. For $1<\alpha<2$ the following  inequalities 
$$N\left(\tfrac{x+y}{2}+p\tfrac{x-y}{2},\tfrac{x+y}{2}-p\tfrac{x-y}{2}\right)\leqslant G(0,\alpha;x,y)\leqslant N\left(\tfrac{x+y}{2}+q\tfrac{x-y}{2},\tfrac{x+y}{2}-q\tfrac{x-y}{2}\right)$$
hold if and only if $p\leqslant\sqrt{\alpha-1}$ and $q\geqslant \sqrt{4^{1-1/\alpha}-1}$.
\end{example}

\begin{example}
For $-1<\alpha<1$ the following  inequalities 
$$H(x,y)\leqslant G(0,\alpha;x,y)\leqslant H\left(\frac{x+y}{2}+q\frac{x-y}{2},\frac{x+y}{2}-q\frac{x-y}{2}\right)$$
hold if and only if   $q\leqslant \sqrt{\frac{1-\alpha}{2}}$.
\end{example}

%################################################################################################
\section{Aproximation by convex combination of means}
Suppose all positive distinct $x,y$three means $K,M,N$ satisfy for  inequalities $K(x,y)< M(x,y)< N(x,y)$. Our goal is to determine the best possible constants $\mu$ and $\nu$ such that the inequalities 
 \begin{equation}
(1-\mu)K(x,y)+\mu N(x,y)\leqslant M(x,y)\leqslant (1-\nu)K(x,y)+\nu N(x,y)
 \label{ineq:convex combination}
 \end{equation} are valid for all $x,y$.
In terms of  Seiffert functions, the inequalities \eqref{ineq:convex combination} have the form
$$\frac{1-\mu}{f_K(z)}+\frac{\mu}{f_N(z)}\leqslant \frac{1}{f_M(z)}\leqslant \frac{1-\nu}{f_K(z)}+\frac{\nu}{f_N(z)},$$
which is equivalent to
\begin{equation}
\mu\leqslant \frac{\frac{1}{f_M(z)}-\frac{1}{f_K(z)}}{\frac{1}{f_N(z)}-\frac{1}{f_K(z)}}\leqslant \nu.
\label{ineq:bounds for convex combination}
\end{equation}
Thus we have the following result.
\begin{theorem}
For three means satisfying $K<M<N$ the inequalities \eqref{ineq:convex combination} hold if and only if 
$$\mu\leqslant \inf_{0<z<1}\frac{\frac{1}{f_M(z)}-\frac{1}{f_K(z)}}{\frac{1}{f_N(z)}-\frac{1}{f_K(z)}} \text{ and } 
\sup_{0<z<1}\frac{\frac{1}{f_M(z)}-\frac{1}{f_K(z)}}{\frac{1}{f_N(z)}-\frac{1}{f_K(z)}}\leqslant \nu.$$
\end{theorem}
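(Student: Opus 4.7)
The proof is essentially a direct translation between the inequality for means and a pointwise inequality for the reciprocals of their Seiffert functions, followed by isolating the parameters. I would proceed as follows.

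First, I would reduce \eqref{ineq:convex combination} to an inequality in Seiffert functions. Fix $x\neq y$ and set $z=|x-y|/(x+y)\in(0,1)$. By the definition \eqref{eq:definition of S_f} applied to each of $K,M,N$, we have $K(x,y)=|x-y|/(2f_K(z))$ and similarly for $M$ and $N$. Dividing \eqref{ineq:convex combination} by $|x-y|/2>0$ yields the equivalent pointwise inequality
$$\frac{1-\mu}{f_K(z)}+\frac{\mu}{f_N(z)}\leqslant \frac{1}{f_M(z)}\leqslant \frac{1-\nu}{f_K(z)}+\frac{\nu}{f_N(z)}.$$
Since every $x\neq y$ in $\mathbb{R}_+^2$ corresponds, via homogeneity, to some $z\in(0,1)$ and conversely, \eqref{ineq:convex combination} holds for all $x,y$ if and only if the displayed inequality holds for all $z\in(0,1)$.

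Next, I would use antimonotonicity (Corollary \ref{cor:antimonotonicity}) applied to the hypothesis $K<M<N$: this gives $f_K(z)>f_M(z)>f_N(z)$ for every $z\in(0,1)$, and therefore
$$\frac{1}{f_N(z)}-\frac{1}{f_K(z)}>0.$$
This positivity is the one non-cosmetic point in the argument, since it lets me divide without reversing inequalities. Subtracting $1/f_K(z)$ throughout and dividing by this positive quantity converts the displayed inequality into the clean equivalent form \eqref{ineq:bounds for convex combination}, namely
$$\mu\leqslant \frac{\frac{1}{f_M(z)}-\frac{1}{f_K(z)}}{\frac{1}{f_N(z)}-\frac{1}{f_K(z)}}\leqslant \nu.$$

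Finally, requiring this to hold for every $z\in(0,1)$ is, by definition of infimum and supremum, the same as requiring $\mu$ to be at most the infimum of the middle ratio and $\nu$ to be at least its supremum, which is exactly the conclusion of the theorem. There is no real obstacle here: the whole content is that the antimonotone correspondence $M\leftrightarrow f_M$ turns the affine-in-means inequality \eqref{ineq:convex combination} into an affine-in-$1/f$ inequality, where the optimal constants can simply be read off as inf/sup of an explicit scalar function of $z$.
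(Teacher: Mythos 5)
Your proof is correct and follows essentially the same route as the paper: the theorem is obtained there by rewriting \eqref{ineq:convex combination} in terms of the Seiffert functions and rearranging into \eqref{ineq:bounds for convex combination}, exactly as you do. Your version merely makes explicit the one point the paper leaves tacit, namely that $\frac{1}{f_N(z)}-\frac{1}{f_K(z)}>0$ by the antimonotonicity of $M\mapsto f_M$, which justifies the division.
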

Let us illustrate this result by finding the optimal bounds of eight Seiffert-like means discussed in Section \ref{sec:new means} by convex combination of $\min$ and $\max$. In this case the inequalities \eqref{ineq:bounds for convex combination} read 
$$\mu\leqslant \frac{1}{2}\left(\frac{1}{f(z)}-\frac{1}{z}+1\right)\leqslant \nu,$$ 
thus we have to find the upper and lower bound of the function $\frac{1}{f(z)}-\frac{1}{z}$. In all eight cases we can write $f(z)=z+cz^3+\dots$, so $\lim_{z\to 0}\frac{1}{f(z)}-\frac{1}{z}=0$.
\begin{example}\label{ex:s}
	Let $f(z)=\sin z$. The function $\cos z$ is concave in $(0,\pi/2)$ thus so are $\cos(tz)$ and $\int_0^1\cos(tz)dt=\frac{\sin z}{z}$. Since the reciprocal of a positive concave function is convex,  $\frac{z}{\sin z}$ is convex and consequetly, its divided difference 
	$$\frac{1}{z}\left(\frac{z}{\sin z}-1\right)=\frac{1}{\sin z}-\frac{1}{z}$$
	increases from $0$ to $1/\sin(1) -1$. Thus
	$$\frac{x+y}{2}\leqslant \frac{x-y}{2\sin\frac{x-y}{x+y}}\leqslant \left(1-\frac{1}{2\sin 1}\right)\min(x,y)+\frac{1}{2\sin 1}\max(x,y).$$
\end{example}
\begin{example}
Let $f(z)=\arcsin z$. To investigate monotonicity of $\frac{1}{\arcsin z}-\frac{1}{z}$, set $z=\sin t$ and use the result of the previous example  to see, that the range of this function is $(2/\pi -1, 0)$. Thus,
$$\frac{1}{\pi}\min(x,y)+\left(1-\frac{1}{\pi}\right)\max(x,y)\leqslant P(x,y)\leqslant \frac{x+y}{2}.$$
\end{example}
\begin{example}
	Let $f(z)=\tan z$. Then,
	\begin{align*}
	\left(\frac{1}{\tan z}-\frac{1}{z}\right)'	&=\frac{1}{z^2}-\frac{\tan'(z)}{\tan^2(z)}=	\frac{\sin^2 z - z^2}{z^2\sin^2 z}<0,
	\end{align*} so $\frac{1}{\tan z}-\frac{1}{z}$ decreases from $0$ to $1/\tan(1)-1$  and consequently
	
	$$\left(1-\frac{1}{2\tan 1}\right)\min(x,y)+\frac{1}{2\tan 1}\max(x,y)\leqslant \frac{x-y}{2\tan\frac{x-y}{x+y}}\leqslant \frac{x+y}{2} .$$
\end{example}
\begin{example}
	In case $f(z)=\arctan z$, we set $z=\tan t$ and use the previous example to get
	$$\frac{x+y}{2}\leqslant T(x,y)\leqslant \left(1-\frac{2}{\pi}\right)\min(x,y)+\frac{2}{\pi}\max(x,y).$$
\end{example}
\begin{example}\label{ex:ht}
	The story of hyperbolic tangent is quite similar to that of tangent, the only difference is that $\sinh z\geqslant z$. 
	$$\frac{x+y}{2}\leqslant \frac{x-y}{2\tanh\frac{x-y}{x+y}}\leqslant  \left(1-\frac{1}{2\tanh 1}\right)\min(x,y)+\frac{1}{2\tanh 1}\max(x,y).$$
\end{example}
\begin{example}
As shall be expected, for the inverse hyperbolic tangent we shall use Example \ref{ex:ht} and the substitution  $t=\tanh z$. Nevertheless, it is wise to note that $\lim_{z\to 1} \artanh z=\infty$, so in this case there is only trivial lower bound
	$$\min(x,y)\leqslant L(x,y)\leqslant \frac{x+y}{2}.$$
\end{example}
\begin{example}
	An attempt to use the same approach as in Example \ref{ex:s} fails for $f(z)=\sinh z$, because $\sinh z/z$ is convex and the reciprocal of a convex function may not be convex.
\begin{align}\label{eq:sinh}
\left(\frac{1}{\sinh z}-\frac{1}{z}\right)'	&=\frac{\sinh^2 z - z^2\cosh z}{z^2\sinh^2 z}\\
	&	=\frac{\cosh z}{\sinh^2 z}\left(\frac{\sinh z \cosh^{-1/2}z}{z}+1\right)\left(\frac{\sinh z \cosh^{-1/2}z}{z}-1\right)\notag.
\end{align}	
Let $q(z)=\sinh z \cosh^{-1/2}z$. Then $q''(z)=\frac{1}{4}\cosh^{-3/5}z\sinh z(\cosh^2z -3)$, hence $q$ is concave in $(0,\arcosh \sqrt{3})$. Since $\arcosh \sqrt{3}\approx 1.14$, we conclude that $q(z)/z$ decreases in the interval $(0,1)$, and thus the expression \eqref{eq:sinh} is negative. Therefore we have 
$$\left(1-\frac{1}{2\sinh 1}\right)\min(x,y)+\frac{1}{2\sinh 1}\max(x,y)\leqslant \frac{x-y}{2\tan\frac{x-y}{x+y}}\leqslant \frac{x+y}{2} .$$
\end{example}
And finally
\begin{example}
	For $f(z)=\arsinh z$, the substitution $z=\sinh t$ converts $\frac{1}{\arsinh z}-\frac{1}{z}$ into $\frac{1}{t}-\frac{1}{\sinh t}$ and monotonicity follows from the previous example, since $1>z>t$. The optimal inequalities then are
	$$\frac{x+y}{2}\leqslant \frac{x-y}{2\arsinh\frac{x-y}{x+y}}\leqslant  \left(1-\frac{1}{2\arsinh 1}\right)\min(x,y)+\frac{1}{2\arsinh 1}\max(x,y).$$
\end{example}

\section{Miscellanea}
In this section, we collect some facts about means and Seiffert functions, that might be useful for future investigations.

This surprising result follows from the results of Section \ref{sec:Means with varying arguments}.
\begin{theorem}
	If $M$ is a mean satisfying $M(x,y)\leq A(x,y)$, and $M_{1/2}(x,y)=M(\frac{3x+y}{4},\frac{x+3y}{4})$, then
	$M_{1/2}^2/A$ is a mean.
\end{theorem}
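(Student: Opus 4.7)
The strategy is to translate everything into Seiffert functions and then apply Theorem~\ref{thm:M_f_M}. Let $f:=f_M$. The hypothesis $M\leqslant A$, combined with Corollary~\ref{cor:antimonotonicity} and the observation $f_A(z)=z$, yields $f(z)\geqslant z$ on $(0,1)$. From Section~\ref{sec:Means with varying arguments} applied with $t=1/2$, the Seiffert function of $M_{1/2}$ is $g(z)=2f(z/2)$, so by \eqref{eq:definition of f_M} we have $M_{1/2}(1+z,1-z)=z/(2f(z/2))$, while trivially $A(1+z,1-z)=1$.

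Set $N:=M_{1/2}^2/A$; this is symmetric and positively homogeneous of order one. A direct computation from \eqref{eq:definition of f_M} gives
\begin{equation*}
f_N(z)=\frac{z\,A(1+z,1-z)}{M_{1/2}(1+z,1-z)^2}=\frac{4f(z/2)^2}{z}.
\end{equation*}
By Theorem~\ref{thm:M_f_M} it therefore suffices to verify that $f_N\in\Sp{S}$, that is, both sides of \eqref{ineq:basic}.

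For the lower bound, the inequality $f(z/2)\geqslant z/2$ immediately yields $f_N(z)\geqslant z\geqslant z/(1+z)$. For the upper bound, since $f$ is itself a Seiffert function we have $f(z/2)\leqslant(z/2)/(1-z/2)=z/(2-z)$, hence $f_N(z)\leqslant 4z/(2-z)^2$; the remaining estimate $4z/(2-z)^2\leqslant z/(1-z)$ rearranges to $4(1-z)\leqslant(2-z)^2$, i.e.\ $0\leqslant z^2$, which is trivial. The only potentially subtle point is noticing that squaring $f(z/2)$ in the numerator of $f_N$ is precisely counterbalanced by the factor $A$ in the denominator so that the Seiffert bounds close; beyond that, the argument is a short computation within the formalism already developed.
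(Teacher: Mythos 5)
Your proposal is correct and follows essentially the same route as the paper: pass to the Seiffert function $g(z)=4f^2(z/2)/z$ of $M_{1/2}^2/A$, use $f(z)\geqslant z$ (from $M\leqslant A$) for the lower bound, and reduce the upper bound $4z/(2-z)^2\leqslant z/(1-z)$ to $0\leqslant z^2$. The only cosmetic difference is that you compute $f_N$ directly from \eqref{eq:definition of f_M}, whereas the paper defines $g$ first and then identifies its corresponding mean.
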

Note that in general $M^2/A$ is not a mean (take the harmonic mean as an counterexample).
\begin{proof}
	Let $f$ be the Seiffert mean of $M$. Then the function $2f(z/2)$ is the Seiffert function of $M_{1/2}$. Consider the function $g(z)=4f^2(z/2)/z$. Since $z\leqslant f(z)$, we have $z\leqslant g(z)$ and 
	$$g(z)\leqslant \frac{4}{z}\frac{z^2/4}{(1-z/2)^2}<\frac{z}{1-z}.$$
Thus $g$ is  a Seiffert function and its corresponding mean is
$$\frac{|x-y|z}{8f^2(z/2)}=\left(\frac{|x-y|}{2\cdot 2f(z/2)}\right)^2\frac{2}{x+y}=\frac{M_{1/2}^2(x,y)}{A(x,y)}.$$
\end{proof}
This result may be generalized as follows
\begin{corollary}
	If $M$ is a mean satisfying $M(x,y)\leq A(x,y)$, and $M_{t}(x,y)=M(\frac{x+y}{2}+t\frac{x-y}{2},\frac{x+y}{2}-t\frac{x-y}{2})$, then
	$M_{t}^{1/t}A^{1-1/t}$ is also a mean.
\end{corollary}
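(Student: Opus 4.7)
The plan is to mirror the proof of the preceding theorem: compute the Seiffert function of $N(x,y) = M_t^{1/t}(x,y)\,A^{1-1/t}(x,y)$ explicitly in terms of $f = f_M$, and then verify that it satisfies the Seiffert condition \eqref{ineq:basic}. By Theorem \ref{thm:M_f_M} this is enough to conclude that $N$ is a mean.

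For the computation, recall from Section \ref{sec:Means with varying arguments} that the Seiffert function of $M_t$ is $f(tz)/t$, so that $M_t(x,y) = t|x-y|/(2 f(tz))$ with $z = |x-y|/(x+y)$. Substituting and using $|x-y| = z(x+y)$, one finds after simplification
$$N(x,y) = \left(\frac{t|x-y|}{2f(tz)}\right)^{1/t}\left(\frac{x+y}{2}\right)^{1-1/t} = \frac{|x-y|}{2\,f_N(z)},\qquad f_N(z) = z\left(\frac{f(tz)}{tz}\right)^{1/t}.$$
(As a sanity check, $t=1/2$ recovers the function $g(z) = 4f(z/2)^2/z$ of the preceding theorem.)

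It remains to show $z/(1+z) \le f_N(z) \le z/(1-z)$. The lower bound is immediate: the hypothesis $M \le A$ combined with the antimonotonicity (Corollary \ref{cor:antimonotonicity}) gives $f(u) \ge u$ for all $u \in (0,1)$, so $f(tz)/(tz) \ge 1$ and hence $f_N(z) \ge z \ge z/(1+z)$. For the upper bound, the Seiffert inequality $f(tz) \le tz/(1-tz)$ yields $f_N(z) \le z/(1-tz)^{1/t}$, and the desired conclusion reduces to $(1-tz)^{1/t} \ge 1-z$. Assuming $0 < t \le 1$ (which is the natural range for $M_t$ to be well-defined on all pairs), we have $1/t \ge 1$, so Bernoulli's inequality applied to $(1+x)^{1/t}$ with $x = -tz$ gives precisely $(1-tz)^{1/t} \ge 1 - z$.

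The only mildly subtle step is the final Bernoulli estimate; everything else is direct book-keeping of the formula $f_N(z) = z(f(tz)/(tz))^{1/t}$ and the two sides of \eqref{ineq:basic}. I would open the proof with the computation of $f_N$, then dispatch the two bounds in turn, explicitly invoking Corollary \ref{cor:antimonotonicity} for the lower inequality and Bernoulli for the upper one, and close by citing Theorem \ref{thm:M_f_M} to identify $N$ as the mean corresponding to the Seiffert function $f_N$.
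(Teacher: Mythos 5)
Your proof is correct and follows exactly the route the paper intends: it reduces the claim to showing that $f_N(z)=z\left(f(tz)/(tz)\right)^{1/t}$ is a Seiffert function, which is precisely the reduction the paper states before leaving ``the details to the reader.'' You have supplied those details correctly --- the lower bound from $f(u)\geqslant u$ and the upper bound from $(1-tz)^{1/t}\geqslant 1-z$ via Bernoulli's inequality (the natural generalization of the $(1-z/2)^2>1-z$ step in the paper's $t=1/2$ case).
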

To prove this, it is enough to show that $g(z)=z[g(tz)/(tz)]^{1/t}$ is a Seiffert function. We leave the details to the reader.
\begin{remark}\label{rem:z^3}
It is easy to see that $z+az^3$ is a Seiffert function if and only if $-1/2\leqslant a\leqslant 1/2$.
\end{remark}
\begin{theorem}
	The inequalities
	\begin{equation}
	A(x,y)\leq \frac{|x-y|}{2\sin \frac{x-y}{x+y}}\leq A(x,y)\frac{6A^2(x,y)}{5A^2(x,y)+G^2(x,y)}
	\label{ineq:sin}
	\end{equation}
	hold.
\end{theorem}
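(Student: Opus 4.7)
The plan is to translate both inequalities into inequalities between Seiffert functions and invoke the antimonotone correspondence from Corollary \ref{cor:antimonotonicity}. Since $A(1+z,1-z)=1$ and $G(1+z,1-z)=\sqrt{1-z^2}$, the Seiffert function of $A$ is $f_A(z)=z$, and the left inequality $A(x,y)\leqslant \frac{|x-y|}{2\sin\frac{x-y}{x+y}}$ is equivalent, by antimonotonicity, to $\sin z \leqslant z$, which is trivial (or is already contained in Lemma \ref{new seiffert means > A}).

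For the right inequality, I first identify the Seiffert function of the upper mean $N(x,y):=A(x,y)\cdot \frac{6A^2(x,y)}{5A^2(x,y)+G^2(x,y)}$. Evaluating at $(1+z,1-z)$ gives $N(1+z,1-z)=\frac{6}{5+(1-z^2)}=\frac{6}{6-z^2}$, hence
$$f_N(z)=\frac{z}{N(1+z,1-z)}=\frac{z(6-z^2)}{6}=z-\frac{z^3}{6}.$$
By Remark \ref{rem:z^3}, with $a=-1/6\in[-1/2,1/2]$, the function $f_N$ is indeed a Seiffert function, so $N\in\Sp{M}$ and it makes sense to compare.

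By Corollary \ref{cor:antimonotonicity}, the desired inequality $\frac{|x-y|}{2\sin\frac{x-y}{x+y}}\leqslant N(x,y)$ is equivalent to $\sin z\geqslant z-\frac{z^3}{6}$ for $z\in(0,1)$. Setting $h(z)=\sin z-z+\tfrac{z^3}{6}$, we have $h(0)=h'(0)=0$ and $h''(z)=z-\sin z\geqslant 0$ on $[0,\infty)$ by the left inequality already established; hence $h'\geqslant 0$ and $h\geqslant 0$, finishing the proof.

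The only conceptual step is recognizing that the awkward-looking upper bound $A\cdot \frac{6A^2}{5A^2+G^2}$ is engineered precisely so that its Seiffert function is the cubic Taylor polynomial $z-z^3/6$ of $\sin z$; once this is seen, both directions reduce to the elementary inequalities $z-z^3/6\leqslant\sin z\leqslant z$, and there is no real obstacle.
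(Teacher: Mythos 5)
Your proposal is correct and follows essentially the same route as the paper: identify $z-z^3/6$ as the Seiffert function of the right-hand mean (a Seiffert function by Remark~\ref{rem:z^3}), and reduce both inequalities via antimonotonicity to $z-z^3/6\leqslant\sin z\leqslant z$. You merely spell out the ``simple calculation'' and the elementary sine estimates that the paper leaves implicit.
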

\begin{proof}
	The sine function satisfies $z>\sin z>z-z^3/6$. It follows from Remark \ref{rem:z^3} that the last function in this chain is a Seiffert function, and a simple calculation shows that its mean is the rightmost mean in \eqref{ineq:sin}.
\end{proof}

For two Seiffert functions $f_M$ and $f_N$ and a homogeneous but necessarily symmetric mean $K$ , the function $g(z)=K(f_M(z),f_N(z))$ is  a Seiffert function  corresponding to $MN/K(N,M)$. In particular, if $K$ is $M,N$ invariant (see Theorem \ref{th:invmean}), then $MN/K$ is also a mean. Thus the weighted arithmetic mean of two Seiffert functions corresponds to the weighted harmonic mean of means and vice versa.

Now we shall show facts about power series representation of Seiffert functions. The first one is trivial.
\begin{theorem}
	If $0\leqslant a_n\leqslant 1$ for $n>1$, then the function $f(z)=z+\sum_{n=2}^\infty a_nz^n$ is  a Seiffert function.
\end{theorem}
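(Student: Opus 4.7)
The plan is straightforward: verify directly the two defining inequalities $z/(1+z) \leq f(z) \leq z/(1-z)$ of \eqref{ineq:basic} for $z \in (0,1)$, using only the coefficient bound $0 \leq a_n \leq 1$ and the geometric series.

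First I would check that the series converges on $(0,1)$: since $a_n z^n \leq z^n$ and $\sum z^n$ converges for $z \in (0,1)$, the series defining $f(z)$ converges absolutely, so $f$ is a well-defined real-valued function on $(0,1)$.

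For the lower bound, since every $a_n \geq 0$, we have
\[ f(z) = z + \sum_{n=2}^\infty a_n z^n \geq z \geq \frac{z}{1+z} \]
for $z \in (0,1)$. So this side of \eqref{ineq:basic} is essentially free.

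For the upper bound, I would use $a_n \leq 1$ to obtain
\[ f(z) \leq z + \sum_{n=2}^\infty z^n = z + \frac{z^2}{1-z} = \frac{z(1-z)+z^2}{1-z} = \frac{z}{1-z}, \]
completing the verification of \eqref{ineq:basic}. There is no genuine obstacle here; the only small point worth noting is that the hypothesis is asymmetric (no lower bound on $a_n$ is needed beyond nonnegativity because positivity of the coefficients automatically pushes $f$ above $z$, which is already $\geq z/(1+z)$), so all the work goes into the upper estimate and amounts to one application of the geometric series formula.
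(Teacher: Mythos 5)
Your proposal is correct and follows essentially the same route as the paper: the lower bound comes for free from $f(z)\geqslant z\geqslant z/(1+z)$ and the upper bound from comparing with the geometric series $\sum_{n=1}^\infty z^n = z/(1-z)$. You merely spell out the details that the paper leaves implicit.
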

\begin{proof}
	Clearly, the series converges for $0<z<1$ and 
	$$z\leqslant f(z)\leqslant \sum_{n=1}^\infty z^n=\frac{1}{1-z}.$$
\end{proof}
The two following theorems concern alternating series.
\begin{theorem}
	Let $1=a_1\geqslant a_2\geqslant \dots\geqslant 0$ be a convex sequence (i.e. satisfying $2a_k\leqslant a_{k-1}+a_{k+1}$ for $k=2,3,\dots$). Then the function $f(z)=z+\sum_{n=2}^\infty (-1)^{n+1}a_nz^n$ is a Seiffert function.
\end{theorem}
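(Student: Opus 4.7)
My plan is to verify the two defining Seiffert inequalities $\frac{z}{1+z}\leqslant f(z)\leqslant \frac{z}{1-z}$ separately. First I note that the series converges on $(0,1)$ since $|(-1)^{n+1}a_nz^n|\leqslant z^n$.

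The upper bound is immediate: subtracting the series $\frac{z}{1-z}=\sum_{n=1}^\infty z^n$ gives
$$\frac{z}{1-z}-f(z)=\sum_{n=1}^\infty\bigl(1-(-1)^{n+1}a_n\bigr)z^n=\sum_{n=1}^\infty\bigl(1+(-1)^{n}a_n\bigr)z^n,$$
and each coefficient is non-negative because $0\leqslant a_n\leqslant 1$. (The $n=1$ coefficient is exactly $0$, which also reflects $a_1=1$.)

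The lower bound is the substantive part. Setting $b_n=1-a_n$, one checks that $b_1=0$, that $b_n\geqslant 0$ is increasing (since $a_n$ decreases) and concave (since convexity $2a_k\leqslant a_{k-1}+a_{k+1}$ becomes $2b_k\geqslant b_{k-1}+b_{k+1}$), and that $b_n$ is bounded by $1$. A direct computation gives
$$f(z)-\frac{z}{1+z}=\sum_{n=1}^\infty(-1)^{n+1}(a_n-1)z^n=\sum_{n=2}^\infty(-1)^{n}b_nz^n,$$
so it suffices to prove the alternating series on the right is non-negative. I will do this by two successive Abel summations. Set $c_n=b_n-b_{n-1}$ for $n\geqslant 2$: these are non-negative (since $b_n$ increases) and decreasing (since $b_n$ is concave), and they tend to $0$ because $\sum c_n$ telescopes to $\lim b_n-b_1\leqslant 1$. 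Writing $b_n=\sum_{k=2}^n c_k$ and interchanging the order of summation yields
$$\sum_{n=2}^\infty(-z)^n b_n=\sum_{k=2}^\infty c_k\sum_{n=k}^\infty(-z)^n=\frac{1}{1+z}\sum_{k=2}^\infty c_k(-z)^k,$$
using the geometric series $\sum_{n=k}^\infty(-z)^n=(-z)^k/(1+z)$. Now let $d_k=c_k-c_{k+1}\geqslant 0$; since $c_k\to 0$, $c_k=\sum_{j=k}^\infty d_j$, and another swap gives
$$\sum_{k=2}^\infty c_k(-z)^k=\sum_{j=2}^\infty d_j\sum_{k=2}^j(-z)^k=\frac{z^2}{1+z}\sum_{j=2}^\infty d_j\bigl(1-(-z)^{j-1}\bigr).$$
For $0<z<1$ each factor $1-(-z)^{j-1}$ is strictly positive, and $d_j\geqslant 0$, so the sum is non-negative. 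Combining gives $f(z)\geqslant\frac{z}{1+z}$.

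The main obstacle is the lower bound, and the critical structural input is that $a_n$ convex and decreasing translates, via two difference operations, into a sequence $d_j$ of non-negative second differences that can be recovered by the Abel procedure above. Once this is recognized, the alternating signs in the geometric-series kernel $(-z)^k/(1+z)$ produce a uniformly positive factor $1-(-z)^{j-1}$, which is precisely what the classical Leibniz-style test provides in disguise.
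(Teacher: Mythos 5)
Your proof is correct, and it reaches the two Seiffert inequalities by a route that is related to but noticeably different from the paper's. For the upper bound the paper groups consecutive terms, $f(z)=z-z^2(a_2-a_3z)-z^4(a_4-a_5z)-\dots$, and obtains the stronger conclusion $f(z)\leqslant z$ using the monotonicity of $(a_n)$; your coefficientwise comparison with $\sum z^n$ yields only $f(z)\leqslant z/(1-z)$, which is all the statement requires and uses nothing beyond $0\leqslant a_n\leqslant 1$. For the lower bound the paper multiplies by $(1+z)$ --- which is precisely your first Abel summation in disguise, since both produce the first differences $a_{n-1}-a_n$ --- and then finishes by the same pairing trick, using that these differences decrease. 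You instead perform a second summation by parts, landing on the second differences $d_j=a_{j-1}-2a_j+a_{j+1}\geqslant 0$ paired against the manifestly positive kernel $1-(-z)^{j-1}$. Your version is more systematic and makes the role of convexity completely explicit as non-negativity of second differences, at the cost of two interchanges of the order of summation, which you should justify by absolute convergence (routine here, since $b_n\leqslant 1$ and $c_k\leqslant 1$ give dominating convergent geometric series); the paper's version is shorter and avoids any rearrangement.
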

\begin{proof}
	Note first, that
	$$f(z)=z-z^2(a_2-a_3z)-z^4(a_4-a_5z)-\dots\leqslant z.$$
	Let $b_n=a_n-a_{n+1}$. This sequence decreases monotonically to $0$ and we have
	\begin{align*}\label{eq:}
		(1+z)f(z)&=z+\sum_{n=2}^\infty (-1)^{n+1}a_nz^n+\sum_{n=1}^\infty (-1)^{n+1}a_nz^{n+1}	\\
		&	=z+z^2(b_1-b_2z)+z^4(b_3-b_4z)+\dots\geqslant z,
	\end{align*}
	thus $f(z)\geqslant \frac{z}{1+z}$.
\end{proof}

\begin{corollary}
	The following functions are Seiffert:
	$$\log(1+z),\quad 2\frac{z-\log(1+z)}{z},\quad 3\frac{\log(1+z)-z+z^2/2}{z^2},\ \dots.$$
\end{corollary}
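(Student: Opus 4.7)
The plan is to show that each function in the list fits the preceding theorem, i.e.\ is of the form $z + \sum_{n \geq 2} (-1)^{n+1} a_n z^n$ for a nonincreasing convex sequence with $a_1 = 1$.

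First I would write the $k$-th function in the list uniformly as
$$f_k(z) = \frac{k\,(-1)^{k-1}}{z^{k-1}} \left[\log(1+z) - \sum_{j=1}^{k-1} (-1)^{j+1} \frac{z^j}{j}\right],$$
with the empty sum taken to be $0$ when $k = 1$; direct substitution confirms that $f_1,f_2,f_3$ reproduce the three displayed examples. Using $\log(1+z) = \sum_{n\ge 1}(-1)^{n+1}z^n/n$, the bracketed quantity equals $\sum_{n\ge k}(-1)^{n+1}z^n/n$, and the index shift $m = n - k + 1$ together with the identity $(-1)^{n+k} = (-1)^{m+1}$ produces
$$f_k(z) = \sum_{m=1}^{\infty} (-1)^{m+1} a_m z^m, \qquad a_m = \frac{k}{m + k - 1}.$$

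Next I would verify the hypotheses of the previous theorem for this sequence. Clearly $a_1 = 1$, $a_m > 0$, and $a_m$ is strictly decreasing in $m$. For convexity, the inequality $2a_m \leq a_{m-1} + a_{m+1}$ becomes, after clearing denominators and writing $N = m + k - 1$, the trivial $(N-1)(N+1) \leq N^2$, that is, $N^2 - 1 \leq N^2$. Hence the preceding theorem applies to every $f_k$, which is exactly the corollary.

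The only subtle point is the bookkeeping in the first step: one must confirm that the prefactor $k(-1)^{k-1}/z^{k-1}$ is precisely the correction that turns the tail of the $\log(1+z)$ series into a series beginning with $+z$ and alternating in sign thereafter. Once the coefficients are identified as $k/(m+k-1)$, everything else is mechanical.
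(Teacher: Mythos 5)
Your proposal is correct and follows exactly the route the paper intends: the corollary is stated without proof as an immediate application of the preceding theorem, and your identification of the coefficients as $a_m=k/(m+k-1)$, together with the verification that this sequence is $1=a_1\geqslant a_2\geqslant\dots\geqslant 0$ and convex (reducing to $N^2-1\leqslant N^2$), is precisely the intended check.
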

\begin{theorem}
	Let $a_n,\ n\geqslant 1$ be nonnegative numbers satisfying $a_1\leqslant 1/2, 1\geqslant a_2\geqslant a_3\geqslant\dots$, then $f(z)=z-a_1z^3+\sum_{n=2}^\infty (-1)^n a_nz^{2n+1}$ is a Seiffert function.
\end{theorem}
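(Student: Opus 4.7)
The plan is to verify both defining Seiffert inequalities $\frac{z}{1+z}\leq f(z)\leq \frac{z}{1-z}$ on $(0,1)$ by expanding everything as power series. Convergence of $f$ on $(0,1)$ is automatic since $a_n\leq 1$ for all $n$ and the tail is dominated by the geometric-type series $\sum z^{2n+1}$.

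For the upper bound I would write $\frac{z}{1-z}=\sum_{k\geq 1}z^k$ and compute $\frac{z}{1-z}-f(z)$ coefficient by coefficient. The even-power coefficients are all $1$, while the coefficient of $z^{2n+1}$ equals $1+a_n$ when $n$ is odd and $1-a_n$ when $n$ is even. Since $a_{2k}\leq a_2\leq 1$ by the monotonicity hypothesis, every coefficient is nonnegative, so $\frac{z}{1-z}\geq f(z)$ termwise on $(0,1)$.

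For the lower bound it is cleaner to establish $(1+z)f(z)-z\geq 0$. Distributing,
$$(1+z)f(z)-z=z^2-(1+z)a_1z^3+(1+z)\sum_{n=2}^\infty(-1)^n a_n z^{2n+1}.$$
I would then pair consecutive terms in the tail:
$$\sum_{n=2}^\infty(-1)^n a_n z^{2n+1}=\sum_{k\geq 1}z^{4k+1}\bigl(a_{2k}-a_{2k+1}z^2\bigr),$$
each summand nonnegative because $a_{2k}\geq a_{2k+1}\geq a_{2k+1}z^2$. What remains is $z^2-a_1 z^3(1+z)=z^2\bigl(1-a_1 z(1+z)\bigr)$. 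Since $z(1+z)$ is increasing on $(0,1)$ with supremum $2$, the hypothesis $a_1\leq 1/2$ makes the bracket nonneg, finishing the argument.

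The only delicate point, more asymmetric than difficult, is that the pairing trick used for $n\geq 2$ is not available for the isolated term $-a_1 z^3$: after multiplication by $1+z$ this term contributes a mass that can be as large as $2a_1$ at the right endpoint, and it must be absorbed by the single quadratic term $z^2$ alone. This is precisely what forces the tighter hypothesis $a_1\leq 1/2$ rather than the uniform bound $a_n\leq 1$ used for the remaining indices.
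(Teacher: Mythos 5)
Your proof is correct and follows essentially the same route as the paper's: the lower bound is obtained by pairing the tail terms as $z^{4k+1}(a_{2k}-a_{2k+1}z^2)$ and handling the head $z-a_1z^3$ versus $\frac{z}{1+z}$ via $a_1\leqslant 1/2$ (the paper cites its Remark on $z+az^3$ for this step, you reprove it inline), while the upper bound is the same comparison with the geometric series. No gaps.
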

\begin{proof}
	As we know from Remark \ref{rem:z^3} that $z-a_1z^3\geqslant \frac{z}{1+z}$, thus
	$$f(z)\geqslant \frac{z}{1+z}+z^5(a_2-a_3z^2)+z^9(a_4-a_5z^2)+\dots\geqslant \frac{z}{1+z}.$$
	On the other hand
	$$f(z)\leqslant \sum_{n=0}^\infty z^{2n+1}< \sum_{n=1}^\infty z^{n}=\frac{z}{1-z},$$
	which completes the proof.
\end{proof}
\begin{corollary}
	The following are Seiffert functions:
	$$\sin z,\quad 6\frac{z-\sin z}{z^2},\quad 120\frac{\sin z -z +z^3/6}{z^4},\ \dots,$$
	$$2\frac{1-\cos z}{z},\quad 24\frac{\cos z - 1 +z^2/z}{z^3},\ \dots .$$
\end{corollary}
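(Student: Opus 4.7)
The plan is to apply the preceding theorem by identifying each function in the two sequences as a normalized remainder of the Taylor series of $\sin z$ or $\cos z$, and then checking that the resulting Taylor coefficients satisfy the hypotheses $a_1\leqslant 1/2$ and $1\geqslant a_2\geqslant a_3\geqslant\cdots\geqslant 0$.

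For the first sequence, for $k\geqslant 0$ I define
$$f_k(z)=(-1)^k (2k+1)!\,z^{-2k}\Bigl(\sin z-\sum_{j=0}^{k-1}(-1)^j\frac{z^{2j+1}}{(2j+1)!}\Bigr),$$
which recovers $f_0=\sin z$, $f_1=6(z-\sin z)/z^2$, $f_2=120(\sin z-z+z^3/6)/z^4$, and so on. Substituting the Maclaurin series for $\sin z$ and reindexing with $n=j-k$ gives
$$f_k(z)=z+\sum_{n=1}^\infty (-1)^n\frac{(2k+1)!}{(2k+2n+1)!}\,z^{2n+1},$$
so the coefficients in the form of the previous theorem are $a_n=(2k+1)!/(2k+2n+1)!$. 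One checks that $a_1=1/[(2k+2)(2k+3)]\leqslant 1/6\leqslant 1/2$, the ratio $a_{n+1}/a_n=1/[(2k+2n+2)(2k+2n+3)]<1$ gives $a_2\geqslant a_3\geqslant\cdots\to 0$, and $a_2\leqslant a_1<1$. The previous theorem therefore yields that each $f_k$ is a Seiffert function.

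For the second sequence, I take the parallel construction
$$g_k(z)=(-1)^{k+1}(2k+2)!\,z^{-(2k+1)}\Bigl(\cos z-\sum_{j=0}^{k}(-1)^j\frac{z^{2j}}{(2j)!}\Bigr),$$
which gives $g_0=2(1-\cos z)/z$, $g_1=24(\cos z-1+z^2/2)/z^3$, and so on, with the Maclaurin expansion
$$g_k(z)=z+\sum_{n=1}^\infty (-1)^n\frac{(2k+2)!}{(2k+2n+2)!}\,z^{2n+1}.$$
The coefficients $a_n=(2k+2)!/(2k+2n+2)!$ satisfy $a_1=1/[(2k+3)(2k+4)]\leqslant 1/12\leqslant 1/2$ and the identical monotonicity estimate, so the preceding theorem again delivers the claim.

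The only delicate step is keeping the alternating signs and the normalizing factorial aligned in the definitions of $f_k$ and $g_k$ so that each function begins with $+z$ rather than $-z$; once the bookkeeping is settled, the proof reduces to a direct power-series expansion and a one-line verification of the hypotheses of the previous theorem.
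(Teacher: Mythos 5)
Your proposal is correct and is exactly the argument the paper intends: each function is a normalized Taylor remainder of $\sin z$ or $\cos z$ whose coefficients $a_n=(2k+1)!/(2k+2n+1)!$ (resp. $(2k+2)!/(2k+2n+2)!$) visibly satisfy $a_1\leqslant 1/2$ and $1\geqslant a_2\geqslant a_3\geqslant\cdots\geqslant 0$, so the preceding theorem applies. The paper states the corollary without proof, and your computation simply supplies the omitted bookkeeping (including silently correcting the paper's typo $z^2/z$ to $z^2/2$).
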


\end{document}